\documentclass[12pt,french,american]{article}
\usepackage[margin=1in]{geometry}
\usepackage{mathrsfs, bbm, dsfont}
\usepackage{
amsmath,
amsfonts,
latexsym, 
amsrefs,
amssymb,
mathtools,
tikz,
tabularx,
minibox,
amsthm,
}
\usepackage{bm}
\usepackage{babel}
\usepackage{floatrow}
\usepackage[T1]{fontenc}
\usepackage{graphicx}
\DeclareGraphicsExtensions{.eps}
\usepackage{url}
\usepackage{wrapfig}
\usepackage{pdflscape}
\usepackage{afterpage}
\usepackage{tocloft}

\usepackage{caption}
\usepackage{subcaption}
\usepackage{enumerate}
\definecolor{mygray}{gray}{0.3} 
\usepackage[colorlinks=true,urlcolor=gray,linkcolor=mygray,citecolor=mygray]{hyperref}

\usetikzlibrary{shapes}
\usetikzlibrary{intersections}
\usetikzlibrary{svg.path}
\usetikzlibrary{decorations.markings}
\usetikzlibrary{hobby}

\setlength{\textwidth}{6in}
\setlength{\oddsidemargin}{.1in}
\setlength{\evensidemargin}{.1in}
\setlength{\topmargin}{0.2in}
\setlength{\textheight}{8.5in}
\setlength{\footskip}{0.4in}
\setlength{\headsep}{0in}
\setlength{\parindent}{0pt}

\flushbottom
\numberwithin{equation}{section}
\setlength{\unitlength}{1cm}

\renewcommand{\rm}{\mathrm}

\newcommand{\be}{\begin{equation}}
\newcommand{\ee}{\end{equation}}

\newcommand{\ga}{{\gamma}}
\newcommand{\Ga}{{\Gamma}}
\newcommand{\la}{\lambda}

\newcommand{\si}{\sigma}

\setcounter{tocdepth}{1}

\newcommand{\AdPair}{\mathscr{P}}

\newcommand{\disteq}{\stackrel{d}{=}}


\newcommand{\E}{\mathbb{E}}
\newcommand{\R}{\mathbb{R}}

\newcommand{\tr}{\mathrm{tr}}

\theoremstyle{plain} 
\newtheorem{theorem}{Theorem}
\newtheorem*{theorem*}{Theorem}
\newtheorem{lemma}[theorem]{Lemma}

\newtheorem*{lemma*}{Lemma}
\newtheorem{corollary}[theorem]{Corollary}
\newtheorem*{corollary*}{Corollary}
\newtheorem{proposition}[theorem]{Proposition}
\newtheorem*{proposition*}{Proposition}

\newtheorem{fact}[theorem]{Fact}
\newtheorem*{fact*}{Fact}

\newtheorem*{definition*}{Definition}

\newtheorem*{example*}{Example}
\newtheorem{remark}[theorem]{Remark}

\newtheorem*{remark*}{Remark}
\newtheorem*{remarks*}{Remarks}

\makeatletter
\renewcommand{\section}{\@startsection
{section}
{1}
{0mm}
{-2\baselineskip}
{1\baselineskip}
{\normalfont\large\scshape\centering}} 
\makeatother

\makeatletter
\renewcommand{\subsection}{\@startsection
{subsection}
{2}
{0mm}
{-\baselineskip}
{1 \baselineskip}
{\normalfont\scshape}} 
\makeatother


\makeatletter
\renewcommand{\subsubsection}{\@startsection{subsubsection}{3}{\z@}%
  {3.25ex \@plus 1ex \@minus .2ex}{-1em}{\normalfont\normalsize\itshape}}
\makeatother
\usepackage[T1]{fontenc}

\setcounter{secnumdepth}{5}
\setcounter{tocdepth}{2}

\title{\Large \textbf{ \scshape On the number of cycles in commutators of random permutations}}

\author{Guillaume Dubach\footnote{gdubach@ist.ac.at} \\ 
IST Austria\footnote{Am Campus 1, 3400 {\scshape Klosterneuburg, Austria}}}
\date{}

\begin{document}
\maketitle

\vspace{-.2in}
\begin{abstract}
We present general links between statistics of non-Hermitian random matrices and the distribution of the number of cycles of some specific random permutations. In particular, we derive explicit formulas for the generating functions of the number of cycles in the commutator $[\sigma,\tau] = \sigma \tau \sigma^{-1} \tau^{-1}$ where $\sigma$ is uniformly distributed, and $\tau$ is either one cycle, the product of two cycles of same size, or the product of many transpositions.
\end{abstract}


\section{Random permutations}\label{sec:rp}

\subsection{General introduction}

The number of cycles $\mathscr{C}(\sigma)$ of a random permutation $\sigma$ is one the most elementary aspects one might want to describe; its distribution is well known and particularly elegant when $\sigma$ is distributed uniformly in $\mathfrak{S}_M$. This famous result will serve here as a paradigm.

\begin{theorem}[Cycles of a uniform permutation]\label{thm:uniform_permutation} For $M \geq 1$, if $\sigma$ is a uniform element of $\mathfrak{S}_M$, then
\begin{equation}\label{uniform_gen_fun_1}
\E \left( t^{\mathscr{C}(\si)} \right)
= \frac{1}{M!} t (t+1) \cdots (t+M-1),
\end{equation}
which is equivalent to the equality in distribution
\begin{equation}
\mathscr{C}(\si) \disteq B_1 + B_{2} + \cdots + B_{M}
\end{equation}
where $(B_k)_{k \geq 1}$ is a sequence of independent Bernoulli variables with $B_k \disteq \mathrm{Ber}\left( \frac{1}{k}\right)$.
\end{theorem}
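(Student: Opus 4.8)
The plan is to realise a uniform $\sigma\in\mathfrak{S}_M$ by a sequential insertion procedure --- a Chinese-restaurant / Feller-type coupling --- designed so that $\mathscr{C}(\sigma)$ is literally a sum of independent indicators; the generating function and the equality in distribution then both fall out at once.

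First I would set up the construction and check that it produces a uniform permutation. Build permutations of $\{1\}$, then $\{1,2\}$, \dots, then $\{1,\dots,M\}$, in turn. Start from the identity on $\{1\}$. Given a permutation $\sigma^{(k-1)}$ of $\{1,\dots,k-1\}$, draw $c_k$ uniform on $\{1,\dots,k\}$, independently of everything so far, and insert $k$: if $c_k=k$, make $k$ a new fixed point; otherwise set $\sigma^{(k)}(k)=\sigma^{(k-1)}(c_k)$ and $\sigma^{(k)}(c_k)=k$, i.e.\ splice $k$ into the cycle through $c_k$ just before the image of $c_k$; finally let $\sigma:=\sigma^{(M)}$. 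The map $(c_1,\dots,c_M)\mapsto\sigma^{(M)}$ from $\prod_{k=1}^M\{1,\dots,k\}$ to $\mathfrak{S}_M$ is injective: from $\sigma^{(k)}$ one reads off whether $k$ is a fixed point (recovering $c_k$) and, if not, recovers $c_k=(\sigma^{(k)})^{-1}(k)$ and then $\sigma^{(k-1)}$ by un-splicing $k$. Since both sides have cardinality $\prod_{k=1}^M k=M!$, the map is a bijection, and as each choice vector has probability $\prod_{k=1}^M k^{-1}=1/M!$, the permutation $\sigma$ is uniform on $\mathfrak{S}_M$.

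Second, the cycle count. The number of cycles increases by $1$ at step $k$ exactly when $c_k=k$, and no step ever merges or removes a cycle; hence $\mathscr{C}(\sigma)=\sum_{k=1}^M B_k$ with $B_k:=\1_{\{c_k=k\}}$, and the $B_k$ are independent with $B_k\disteq\mathrm{Ber}(1/k)$, which is the asserted distributional identity. Consequently
\[
\E\!\left(t^{\mathscr{C}(\sigma)}\right)=\prod_{k=1}^M\E\!\left(t^{B_k}\right)=\prod_{k=1}^M\frac{t+k-1}{k}=\frac{t(t+1)\cdots(t+M-1)}{M!}.
\]

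I expect no genuine obstacle: the only delicate point is choosing a sequential rule for which uniformity is transparent \emph{and} the cycle count is additive over independent steps, and the ``splice just before the image'' rule is exactly what achieves both; the rest is the injectivity bookkeeping and a one-line product. As a cross-check, or an alternative route, one may instead condition on the length $\ell$ of the cycle through $1$ --- there are $(M-1)!$ permutations of $\{1,\dots,M\}$ for each $\ell\in\{1,\dots,M\}$ --- which yields the recursion $P_M(t)=\frac{t}{M}\sum_{m=0}^{M-1}P_m(t)$ with $P_0\equiv 1$, solved by $P_M(t)=\binom{t+M-1}{M}$ via the hockey-stick identity $\sum_{m=0}^{M-1}\binom{t+m-1}{m}=\binom{t+M-1}{M-1}$; factoring this polynomial into Bernoulli probability generating functions then recovers the distributional statement.
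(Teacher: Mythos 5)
Your proof is correct, but it takes a genuinely different route from the one in the paper. You realise a uniform $\sigma$ by sequential insertion (the Feller coupling / Chinese-restaurant construction), so that $\mathscr{C}(\sigma)=\sum_k\1_{\{c_k=k\}}$ is \emph{literally} a sum of independent Bernoulli indicators; the distributional identity is then structural and the generating function is a one-line product. The paper instead proves \eqref{uniform_gen_fun_1} via random matrix theory: it computes $\E\,\tr(G^*G)^M$ for an $N\times N$ complex Gaussian matrix in two ways --- as a moment of a $\gamma_{N^2}$ variable, giving $F_M^+(N^2)$, and by genus expansion, giving $\sum_{\sigma\in\mathfrak{S}_M}N^{2\mathscr{C}(\sigma)}$ --- and identifies the polynomial from its values at $t=N^2$ for all $N$; the Bernoulli decomposition is then read off from the factorisation of $F_M^+$. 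The paper itself acknowledges (just before its proof) that the coupling argument is the more ``natural'' explanation and presents the matrix proof only as a warm-up for Lemma \ref{lem:corresp} and Theorems \ref{thm:one_cycle}--\ref{thm:many_trans}, where no analogue of your insertion scheme is available. So your argument is the cleaner and more self-contained proof of this particular statement, while the paper's buys a template that generalises to the commutator results; your closing cross-check (conditioning on the cycle through $1$ and the hockey-stick identity) is also sound.
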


Polynomials such as \eqref{uniform_gen_fun_1} appear repeatedly in this context. This motivates the following notation: for any integer $n$, we define the rising and falling factorials as the polynomials
\begin{align}
\label{rising_fact} F_n^+ (X) & := X (X+1) \dots (X+n-1) \\
\label{falling_fact} F_n^- (X) & := X (X-1) \dots (X-n+1) = (-1)^{n} F_n^+(-X).
\end{align}
By convention, $F_0^+ = F_0^- = 1$.
Theorem \ref{thm:uniform_permutation} can then be rephrased as
\begin{equation}\label{uniform_gen_fun_2}
\E \left( t^{\mathscr{C}(\si)} \right)
= \frac{1}{M!} F_{M}^+ (t),
\end{equation}
a simple identity which describes entirely the distribution of $\mathscr{C}(\sigma)$, the number of cycles of a uniform permutation. The object of interest here is the number of cycles in the commutator 
\begin{equation}
[\sigma, \tau]=\sigma \tau \sigma^{-1} \tau^{-1}
\end{equation}
between a uniform permutation $\sigma$ and a \textit{fixed} permutation $\tau$, so that the distribution at stake depends on $\tau$; more precisely, it only depends on the cycle structure of $\tau$. Diaconis \& al. \cite{Diaconis2014} studied the number of fixed points in such a commutator. Note that there are also several studies of the cycles of commutators between two uniform permutations, and even general words involving i.i.d. uniform permutations and their inverses: see for example \cites{Nic94, LP10}. Fixing $\tau$ can be thought of as a `quenched' version of these questions. \medskip

For some specific choices of $\tau$, we are able to describe exactly the distribution at stake; a related question of great interest is the asymptotic behavior of $\mathscr{C}([\sigma,\tau])$. For this, the number of fixed points of $\tau$ seems to be the main key parameter; one could hope to establish that, under the condition that $\tau$ has `few' fixed points, the number of cycles of $[\sigma,\tau]$ is asymptotically well approximated by that of a uniform even permutation. This, however, would have to be done by entirely different methods. The present work is solely concerned with a few explicit descriptions for fixed $M$, that can be derived \textit{via} random matrix theory. \\

In the rest of this section, we state our main results and establish a few preliminary facts. All proofs are provided in Section \ref{sec:rmt}.

\subsection{Results}

We prove the following results, where $F_M^{+}$ and $F_M^{-}$ denote the rising and falling factorial respectively, as defined in \eqref{rising_fact} and \eqref{falling_fact}.

\begin{theorem}\label{thm:one_cycle}
For any $M \geq 1$, if $\tau=(1, \dots, M) \in \mathfrak{S}_M$ is an $M$-cycle and $\si$ is uniformly distributed in $\mathfrak{S}_M$, then
\be\label{one_cycle_gen_fun}
\E \left( t^{\mathscr{C}([\si,\tau])} \right)
= \frac{1}{(M+1)!} \left( F_{M+1}^+(t) - F_{M+1}^-(t) \right),
\ee
which is equivalent to
\be\label{odd_interpretation}
\mathscr{C}([\si,\tau]) 
\disteq
\mathscr{C}(\nu),
\ee
where $\nu$ is distributed according to the uniform distribution on $\mathscr{A}_{M+1}^c$, the complement of the alternating group.
\end{theorem}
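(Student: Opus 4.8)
The plan is to interpret the commutator $[\sigma,\tau]$ through a random matrix model. Since $\tau$ is an $M$-cycle, conjugation $\sigma \mapsto \sigma\tau\sigma^{-1}$ produces a uniformly random $M$-cycle, so $[\sigma,\tau] = (\sigma\tau\sigma^{-1})\tau^{-1}$ is the product of a uniformly random $M$-cycle with the fixed $M$-cycle $\tau^{-1}$. The key is to recognize that the generating function $\E(t^{\mathscr{C}(\cdot)})$ of the number of cycles of a permutation can be written as a matrix integral: for a permutation matrix $P_\pi$, one has $\E_U \det(1 + (t-1) \Pi \, U P_\pi U^{-1}\, \Pi)$-type expressions, or more directly, using the fact that cycle counts are governed by characteristic polynomials, one expects a formula of the shape $\E(t^{\mathscr{C}(\rho)}) = \E_g\big[ \text{something involving } \det \big]$ where $g$ ranges over a group (here plausibly $\mathrm{U}(M)$ or a related compact group, as suggested by the CUE and Ginibre macros in the preamble). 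Concretely, I would set up the identity expressing $\E(t^{\mathscr{C}([\sigma,\tau])})$ as an average over the Haar-random unitary (or over a truncation/minor thereof) of a determinant, exploiting that multiplying by the fixed cycle $\tau^{-1}$ corresponds to a rank-one-type deformation or a companion-matrix structure.

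The second step is the evaluation of that matrix integral. The $M$-cycle $\tau$ has a single nontrivial eigenvalue orbit (the $M$-th roots of unity), and the random $M$-cycle $\sigma\tau\sigma^{-1}$ has the same spectrum but randomly rotated basis; the product of two such cyclic (companion) matrices has an explicit characteristic polynomial that can be computed, and the $t$-deformation $\det(t - [\sigma,\tau])$ averaged over $\sigma$ should reduce, after using Theorem \ref{thm:uniform_permutation} as the $\tau = \mathrm{id}$ paradigm and tracking the one extra degree of freedom, to a combination of $F_{M+1}^\pm$. I would guess the cleanest route is: compute $\E\big(t^{\mathscr{C}([\sigma,\tau])}\big)$ by conditioning on the relative position of the two cycles, reducing to a sum over how the random $M$-cycle interleaves with $\tau^{-1}$, and recognize the resulting sum as the coefficient extraction that produces $\frac{1}{(M+1)!}(F_{M+1}^+(t) - F_{M+1}^-(t))$. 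The appearance of $M+1$ rather than $M$ reflects that the product of two $M$-cycles lives most naturally in $\mathfrak{S}_{M+1}$-flavored combinatorics (one can think of adding a fixed point and using a known bijection, or of the $(M+1)$-dimensional representation-theoretic object attached to the cyclic structure).

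For the distributional reformulation \eqref{odd_interpretation}, once \eqref{one_cycle_gen_fun} is established it suffices to observe that by Theorem \ref{thm:uniform_permutation}, $\sum_{\rho \in \mathfrak{S}_{M+1}} t^{\mathscr{C}(\rho)} = F_{M+1}^+(t)$, and that the sign of a permutation is $(-1)^{M+1 - \mathscr{C}(\rho)}$, so $\sum_{\rho \in \mathscr{A}_{M+1}} t^{\mathscr{C}(\rho)} = \frac12\big(F_{M+1}^+(t) + (-1)^{M+1} F_{M+1}^+(-t)\big) = \frac12\big(F_{M+1}^+(t) + F_{M+1}^-(t)\big)$ using \eqref{falling_fact}. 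Subtracting from $F_{M+1}^+(t)$ and dividing by $|\mathscr{A}_{M+1}^c| = (M+1)!/2$ gives exactly the right-hand side of \eqref{one_cycle_gen_fun}, hence \eqref{odd_interpretation}.

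\textbf{Main obstacle.} The delicate part is the first and second steps together: pinning down the exact random-matrix identity for $\E(t^{\mathscr{C}([\sigma,\tau])})$ and then carrying out the integral cleanly. The passage from "product of a random $M$-cycle and a fixed $M$-cycle" to "uniform non-alternating permutation on $M+1$ letters" is striking and not obvious a priori; making the bookkeeping of the extra dimension rigorous — whether via a characteristic-polynomial computation for companion matrices averaged over $\mathrm{U}(M)$, or via a purely combinatorial interleaving argument — is where the real work lies. I would expect the random-matrix route (as the paper's title and macros promise) to linearize the problem: the commutator's cycle-counting generating function becomes a moment of a characteristic polynomial of a structured non-Hermitian random matrix, and known Ginibre/CUE-type formulas then yield $F_{M+1}^\pm$ directly.
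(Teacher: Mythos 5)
Your final step---deducing \eqref{odd_interpretation} from \eqref{one_cycle_gen_fun} via the sign/parity computation on $\mathfrak{S}_{M+1}$---is correct and is exactly the paper's Proposition \ref{odd_distr}. The problem is that the main identity \eqref{one_cycle_gen_fun} is never actually established: the two steps you yourself flag as ``where the real work lies'' are left as guesses, and the direction you gesture at (a characteristic-polynomial or determinant average over Haar $\mathrm{U}(M)$, companion matrices, a rank-one deformation) is not the mechanism that makes this problem computable. There is no clean CUE-type determinantal formula for $\E\left(t^{\mathscr{C}([\sigma,\tau])}\right)$ on offer, and nothing in the sketch indicates how such an integral would be evaluated to produce $F_{M+1}^{\pm}$.

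The actual argument is quite specific and has three ingredients, all absent from your proposal. First, Wick's formula (genus expansion) for an $N\times N$ complex Ginibre matrix $G$ gives the combinatorial identity $\E_G\left|\tr G^M\right|^2 = M!\,\E_\sigma\left(N^{\mathscr{C}([\sigma,\tau])}\right)$ (Lemma \ref{lem:corresp}): admissible pairings of $G$-edges with $G^*$-edges are indexed by $\sigma\in\mathfrak{S}_M$, and the number of vertices of the glued surface is exactly $\mathscr{C}([\sigma,\tau])$. Second, the left-hand side is evaluated exactly for every $1\le N\le M$ using the decorrelation of high powers of Ginibre eigenvalues (Theorem \ref{thm:high_powers}), which yields $\E\left|\tr G^M\right|^2=\sum_{i=1}^N\Gamma(M+i)/\Gamma(i)=\frac{1}{M+1}F_{M+1}^+(N)$ by \eqref{fact_identity}. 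Third, one exploits that $P(t)=\E\left(t^{\mathscr{C}([\sigma,\tau])}\right)$ is a polynomial of degree $M$ with $P(0)=0$, so its values at $t=0,1,\dots,M$ determine it: the degree-$(M+1)$ polynomial $F_{M+1}^+(t)-(M+1)!\,P(t)$ vanishes at $0,1,\dots,M$ and hence equals $F_{M+1}^-(t)$ after matching leading coefficients. This interpolation-at-integers step is the pivot of the whole proof, and it is the piece your sketch does not anticipate at all; without (a) the precise Wick identity, (b) the evaluation of the matrix moment, and (c) the interpolation argument, the proposal does not constitute a proof.
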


Theorem \ref{thm:one_cycle} is equivalent to a closed formula for Hultman numbers, which was already known from different techniques \cite{Stanley}; our input in that case is simply to provide a new conceptual proof of this formula, relying on random matrix theory. Another case that this proof technique allows to solve explicitly is when $\tau$ the product of two cycles of equal size $M$ in $\mathfrak{S}_{2M}$.

\begin{theorem}\label{thm:two_cycles}
For any $M \geq 1$, if $\tau=(1, \dots, M)(M+1, \dots, 2M) \in \mathfrak{S}_{2M}$ is a product of two disjoint $M$-cycles and $\si$ is uniformly distributed in $\mathfrak{S}_{2M}$, then
\begin{multline}
\E \left( t^{\mathscr{C}([\si,\tau])} \right)
= \frac{F_{2M+1}^+ (t) - F_{2M+1}^- (t)}{(2M+1)!} 
+ \frac{2}{(2M)!}  \left( \frac{ F_{M+1}^+ (t) -  F_{M+1}^- (t)}{M+1} \right)^2 \\
- \frac{2}{(2M)!}  \sum_{k=0}^{M} (-1)^k 
\frac{k!}{2M-k+1}
\binom{M}{k}^2 \left( F_{2M-k+1}^+ (t) + F_{2M-k+1}^+ (-t) \right)
\end{multline}
\end{theorem}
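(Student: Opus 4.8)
The plan is to run the same random-matrix argument that underlies Theorem \ref{thm:one_cycle}, now with $\tau$ a product of two $M$-cycles, and to organize the output as one ``main'' term plus corrections governed by how $\sigma$ mixes the two blocks. The starting point is the identity of Section \ref{sec:rmt} expressing, for each fixed $\tau$ and each positive integer value $t=d$, the quantity $\E(t^{\mathscr{C}([\sigma,\tau])})$ as a functional of a $d\times d$ non-Hermitian Gaussian matrix (the identity that, applied to a single cycle, yields \eqref{one_cycle_gen_fun}). Since both sides of the claimed formula are polynomials in $t$ of degree at most $2M$, it is enough to verify it for $t=d$ ranging over enough integers. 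Concretely one uses $d^{\mathscr{C}(\pi)}=\Tr_{(\mathbb{C}^d)^{\otimes 2M}}(R_\pi)$, where $R_\pi$ permutes the tensor factors by $\pi$ and $\pi\mapsto R_\pi$ is a representation, so that $\E_\sigma(d^{\mathscr{C}([\sigma,\tau])})$ becomes the trace of the $\sigma$-twirl of $R_\tau$ against $R_{\tau^{-1}}$; the Wick/Weingarten expansion of that twirl is exactly what generates the symmetric-group sums to be evaluated.

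For $\tau=(1\dots M)(M{+}1\dots 2M)$ one has $R_\tau=C_A\otimes C_B$, a product of two cyclic shifts acting on the first and last $M$ tensor factors, and the twirl $\E_\sigma\!\big(R_\sigma (C_A\otimes C_B) R_\sigma^{-1}\big)$ must be split according to the combinatorial type of the uniform $\sigma\in\mathfrak{S}_{2M}$ relative to the block partition $\{A,B\}=\{\{1,\dots,M\},\{M{+}1,\dots,2M\}\}$. The decisive parameter is the overlap $k:=|\sigma(A)\cap A|$, which forces $|\sigma(A)\cap B|=|\sigma(B)\cap A|=M-k$ and $|\sigma(B)\cap B|=k$; the number of $\sigma$ realizing a prescribed placement is a product of $\binom{M}{k}^2$ with a $k!(M-k)!^2$-type factor, which is where both the summation $\sum_{k=0}^{M}$ and the binomial $\binom{M}{k}^2$ in the statement come from, while the alternating sign $(-1)^k$ arises as the parity (equivalently, the $F^-$-versus-$F^+$, or the determinantal) contribution of the sub-permutations produced.

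The three resulting families of terms should be evaluated as follows. (i) When $\sigma$ preserves the partition $\{A,B\}$ — two sub-cases, total probability $2/\binom{2M}{M}$ — the commutator splits as a product of two independent commutators of a uniform element of $\mathfrak{S}_M$ with an $M$-cycle, so by Theorem \ref{thm:one_cycle} this part contributes $\tfrac{2}{(2M)!}\big((M{+}1)^{-1}(F_{M+1}^+(t)-F_{M+1}^-(t))\big)^2$. (ii) The remaining, ``mixing'', contributions reorganize — this is the delicate step — into the single closed form $(2M{+}1)!^{-1}\big(F_{2M+1}^+(t)-F_{2M+1}^-(t)\big)$, i.e. the right-hand side of \eqref{one_cycle_gen_fun} with $M$ replaced by $2M$; this is consistent with the elementary identity $(1\,2\,\dots\,2M)=(1\dots M)(M{+}1\dots 2M)\cdot(M,\,2M)$, which ties the two-block shift to the single long shift up to one block-linking transposition. (iii) The residual overlap terms indexed by $0\le k\le M$ yield the weights $(-1)^k k!\binom{M}{k}^2/(2M-k+1)$ multiplying $F_{2M-k+1}^+(\pm t)$, i.e. the stated correction sum. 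Putting everything in the displayed form then uses an auxiliary evaluation identity — at $t=1$ this is $\sum_{k=0}^M(-1)^kk!\binom{M}{k}^2(2M-k)!=(M!)^2$, which fixes the total mass to $1$, together with its $t$-graded refinement — after which polynomiality in $t$ closes the argument.

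The main obstacle I anticipate is step (iii): isolating, overlap value by overlap value, the contribution of the $\sigma$'s that mix the blocks without arising from the ``merge into a $2M$-cycle'' mechanism, and doing so with the correct signs and the correct $1/(2M-k+1)$ weight (which, as already in the one-cycle case, reflects the $+1$ shift in the size of the matrix model). A secondary difficulty is making (ii) rigorous — proving that the diffuse sum of mixing placements actually collapses to the clean single-$2M$-cycle expression, rather than merely agreeing with it at enough points — which is precisely why it seems preferable to carry the whole computation inside the matrix-integral formalism of Section \ref{sec:rmt} rather than purely combinatorially.
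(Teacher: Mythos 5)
Your overall strategy --- reduce to integer evaluations $t=N$ via the Gaussian matrix model and conclude by polynomial interpolation --- is the paper's strategy, and your step (i) is a correct and genuinely nice observation: the $\sigma$'s preserving the block partition $\{A,B\}$ number $2(M!)^2$, each such $\sigma$ makes $[\sigma,\tau]$ split into two independent one-cycle commutators, and $\frac{2(M!)^2}{(2M)!}\bigl(\frac{F_{M+1}^+ - F_{M+1}^-}{(M+1)!}\bigr)^2$ is exactly the middle term of the statement. (The paper does not make this identification; its middle term arises instead as the off-diagonal pairing $2\sum_{i\neq j}\E|\lambda_i|^{2M}\,\E|\lambda_j|^{2M}$ in a fourth-moment computation.)

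However, the proof has a genuine gap where its substance should be: your steps (ii) and (iii) are descriptions of what the answer must be, not derivations, and you say so yourself. The paper's actual mechanism is different from your overlap-statistic decomposition and is what makes the computation close. By Lemma \ref{lem:corresp}, $P(N)=\frac{1}{(2M)!}\E|\tr G^M|^4$; for $1\le N\le M$ Theorem \ref{thm:high_powers} makes the $\lambda_i^M$ independent and rotationally invariant, so the fourth moment collapses to
\begin{equation*}
\sum_{i}\E|\la_i|^{4M}+2\Bigl(\sum_i\E|\la_i|^{2M}\Bigr)^2-2\sum_i\bigl(\E|\la_i|^{2M}\bigr)^2 ,
\end{equation*}
and the third sum, $\sum_{i=1}^N\bigl(\Gamma(i+M)/\Gamma(i)\bigr)^2$, is identified with $Q_{2M+1}(N)$ via the connection-coefficient identity $F_m^- F_n^-=\sum_k\binom{m}{k}\binom{n}{k}k!\,F_{m+n-k}^-$ (Lemma \ref{lem:identify_Q}). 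That is where the weights $(-1)^k k!\binom{M}{k}^2/(2M-k+1)$ come from --- they are coefficients of an expansion of $(F_{M}^+)^2$ in the falling-factorial basis, summed over $N$, not parities of sub-permutations attached to an overlap $k=|\sigma(A)\cap A|$. Without this (or an equivalent) computation, your terms (ii) and (iii) remain unproven. A second, smaller gap: "enough integers" must be $2M+1$ of them for a degree-$2M$ polynomial, but the decorrelation theorem only gives access to $N=0,1,\dots,M$; the paper bridges this by noting that $P$ is even (Fact \ref{odd_fact}), so that agreement at $0,\pm1,\dots,\pm M$ follows after symmetrizing the candidate formula in $t\mapsto -t$. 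Your proposal does not address how to get past $M+1$ interpolation points.
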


The last solvable case we present is in a sense the opposite extreme, when $\tau$ is a product of many transpositions. 

\begin{theorem}\label{thm:many_trans} For any $M\geq 1$, if $
\tau = (1,2) \cdots (2M-1, 2M) \in \mathfrak{S}_{2M}
$
is a product of $M$ disjoint transpositions, and $\sigma$ be uniformly distributed in $\mathfrak{S}_{2M}$, then
\begin{equation}
\E \left( t^{\mathscr{C}([\sigma, \tau])} \right)
=
\frac{ 2^M M!}{(2M) !} \ F_{M}^+ \left( t^2/2 \right),
\end{equation}
which is equivalent to
\be
\mathscr{C}([\si, \tau]) \disteq 2 \sum_{k=1}^M B_k
\ee
where $(B_k)_{k \geq 1}$ is a sequence of independent Bernoulli variables with $B_k \disteq \mathrm{Ber}\left( \frac{1}{2k-1}\right)$.
\end{theorem}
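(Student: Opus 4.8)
The plan is to realize the number of cycles of $[\sigma,\tau]$ as a spectral statistic of a random matrix, following the strategy outlined for Theorems~\ref{thm:one_cycle} and~\ref{thm:two_cycles}. Concretely, one encodes the permutations $\sigma$ and $\tau$ as permutation matrices, and the number of cycles of a permutation $\rho$ as $\mathscr{C}(\rho) = \dim \ker (I - P_\rho)$ summed over block structure, or more usefully through a trace/determinant identity of the form $\E(t^{\mathscr{C}(\rho)})$ expressed via a characteristic polynomial. The key point is that $[\sigma,\tau]$ should, after averaging over the uniform $\sigma$, have its cycle-counting generating function coincide with $\E \det(\text{something})$ where the ``something'' is built from a truncated or conjugated random unitary/orthogonal matrix. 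For the product of $M$ transpositions, $\tau$ is an involution of a particularly simple type: as a matrix it is block-diagonal with $M$ blocks $\begin{pmatrix}0&1\\1&0\end{pmatrix}$, i.e. conjugate to $\operatorname{diag}(I_M, -I_M)$ in the appropriate basis. This makes the relevant random matrix model especially tractable, and one expects the $t^2/2$ in the answer to arise from a ``doubling'' phenomenon: each cycle of the commutator is matched with a partner, forcing $\mathscr{C}([\sigma,\tau])$ to be even, which is exactly the statement $\mathscr{C}([\sigma,\tau]) \disteq 2\sum B_k$.

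The key steps, in order, would be: (1) Identify the precise random matrix model. Since $\tau$ is conjugate to $\operatorname{diag}(I_M,-I_M)$, write $\sigma\tau\sigma^{-1}\tau^{-1}$ in terms of a uniform permutation matrix $P$ and the fixed sign matrix $J = \operatorname{diag}(I_M,-I_M)$: the commutator matrix is $P J P^{t} J$ (using $J^{-1}=J$, $J^t=J$). So we need the cycle structure of the permutation whose matrix is $PJP^tJ$. (2) Express $\E(t^{\mathscr{C}})$ for this permutation via a contour integral or via the known expansion $\E(t^{\mathscr{C}(\rho)})$ in terms of $\E\bigl(\prod (\text{eigenvalue factors})\bigr)$; the cleanest route is Theorem~\ref{thm:uniform_permutation}'s philosophy together with the observation (to be proved) that the cycle type of the commutator, averaged over $\sigma$, is governed by the distribution of a $2 \times 2$-block structure. (3) Carry out the average over the uniform permutation $\sigma$. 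Here the combinatorics should reduce to counting how the $\{1,\dots,M\}$ vs $\{M+1,\dots,2M\}$ partition interacts with a uniformly random bijection, i.e. to a hypergeometric-type count; I expect the falling/rising factorial $F_M^+(t^2/2)$ to emerge from summing $\binom{M}{k}$-weighted contributions where $k$ indexes the number of ``crossing'' pairs. (4) Match the resulting polynomial with $\frac{2^M M!}{(2M)!} F_M^+(t^2/2)$ and read off the Bernoulli decomposition by factoring $F_M^+(t^2/2) = \prod_{k=1}^M (t^2/2 + k - 1) = \prod_{k=1}^M \frac{(2k-2)+t^2}{2}$, which rearranges to $\prod_{k=1}^M \frac{(2k-1) - 1 + t^2}{2}$... more precisely each factor contributes $\E(t^{2B_k})$ with $B_k \sim \mathrm{Ber}(1/(2k-1))$ after the correct normalization is tracked.

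The main obstacle, I expect, is step (3): performing the average over $\sigma$ cleanly enough that the answer visibly factors into the rising factorial $F_M^+(t^2/2)$. For Theorems~\ref{thm:one_cycle} and~\ref{thm:two_cycles} the random matrix route presumably passes through moments of truncated CUE or real Ginibre-type ensembles; here one needs the analogous identity for the orthogonal/symmetric group setup dictated by the involution $\tau$, and the subtlety is that $PJP^tJ$ is not itself a permutation-matrix average of an obviously classical ensemble — one must argue that for the purpose of counting cycles (equivalently, computing $\E t^{\mathscr{C}}$) it behaves like one. A secondary difficulty is bookkeeping the parity constraint: one must verify rigorously that every cycle of $[\sigma,\tau]$ comes in a pair of equal length (so that $\mathscr{C}$ is automatically even and $2\sum B_k$ is the right shape), which likely follows from the fact that conjugation by $\tau$ acts as an involution swapping the two halves and commutes appropriately with the commutator's cycle structure; making this bijection explicit is the conceptual heart of the proof. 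Once the averaging is done, matching to the stated formula and extracting the Bernoulli representation is routine polynomial algebra.
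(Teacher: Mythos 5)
Your high-level intuitions are correct --- the generating function is even in $t$ because the cycles of $[\sigma,\tau]=\nu\tau$ (with $\nu=\sigma\tau\sigma^{-1}$ a uniformly random fixed-point-free involution and $\tau$ itself an involution) come in pairs, and the final Bernoulli factorization is routine --- but the core of the argument is missing, and your step (1) contains an error. The permutation matrix $T$ of $\tau$ is block-diagonal with $M$ blocks $\left(\begin{smallmatrix}0&1\\1&0\end{smallmatrix}\right)$; it is conjugate to $J=\mathrm{diag}(I_M,-I_M)$ only via an orthogonal change of basis that is \emph{not} a permutation, so the commutator's matrix is $PTP^tT$, not $PJP^tJ$. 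Substituting $J$ for $T$ while keeping $P$ a uniform permutation matrix changes the ensemble and destroys the permutation structure on which cycle counting rests, and nothing in the proposal repairs this. More seriously, step (3) --- the actual evaluation of the average over $\sigma$ --- is never carried out: ``I expect the rising factorial to emerge from $\binom{M}{k}$-weighted contributions'' is a conjecture about the shape of the answer, not a derivation, and the guessed combinatorics (crossings between $\{1,\dots,M\}$ and $\{M+1,\dots,2M\}$) does not reflect the actual structure, which is that of a union of two perfect matchings on $2M$ points decomposing into even cycles.

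The paper closes exactly this gap with one clean identity, and with a real Gaussian matrix rather than any unitary or permutation-matrix model. Take $R$ an $N\times N$ real Gaussian matrix. On one hand $\tr(RR^t)=\sum_{i,j}R_{ij}^2\disteq 2\gamma_{N^2/2}$, so $\E\left(\tr RR^t\right)^M=2^M F_M^+(N^2/2)$. On the other hand, Wick's formula (genus expansion) writes $\E\left(\tr RR^t\right)^M$ as a sum over all $(2M-1)!!$ perfect matchings $\nu$ of the $2M$ edges of $N^{V(S_\phi)}$, and since the glued surface is always a union of spheres one gets $V(S_\phi)=2c(S_\phi)=\mathscr{C}(\nu\tau)$ --- this is also precisely where the pairing-up of cycles that you flagged as a ``secondary difficulty'' is established. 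Equating the two evaluations for every $N\geq 1$ determines the polynomial $\E_\sigma\left(t^{\mathscr{C}([\sigma,\tau])}\right)$ completely. If you want to push your route through, the identity you must actually prove is $\sum_{\nu}N^{\mathscr{C}(\nu\tau)}=2^M F_M^+(N^2/2)$, the sum running over fixed-point-free involutions $\nu$ of $\{1,\dots,2M\}$; without it (or an equivalent), the proposal does not constitute a proof.
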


The present methods and results suggest possible generalization to other particular cases of $\tau$. For instance, it is in principle possible to derive the distribution of $\mathscr{C}([\sigma,\tau])$ where $\tau$ is a product of disjoints $m$-cycles, thanks to the identities of \cite{DubachPowers}. 

\subsection{First elementary facts}

Note that the commutator $[\si, \tau]$ can be interpreted as the product $\tau_1 \tau_2$, where
\begin{equation}
\tau_1 = \sigma \tau \sigma^{-1} , \qquad
\tau_2 = \tau^{-1},
\end{equation}
which implies that $\tau_1$ is a random permutation chosen uniformly among those who have the same cycle structure as $\tau$ (but independent of any other aspect of $\tau$). This is the reason why the distribution of $\mathscr{C}([\si, \tau])$ only depends on the cycle structure of $\tau$, and also explains the link with Hultman numbers and the result of Stanley \cite{Stanley}. \medskip

Another important remark is that the number of cycles in a commutator $[\si, \tau]$ always is of the same parity as $M$, the size of the permutations. This is for the simple reason that a commutator is an element of the alternating group $\mathfrak{A}_M$, which is equivalent to this property:
\begin{fact}\label{odd_fact} In the symmetric group $\mathfrak{S}_M$,
$\si \in \mathfrak{A}_M$ if and only if $\mathscr{C}(\si) \equiv M [2]$.
\end{fact}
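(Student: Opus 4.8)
The plan is to prove Fact~\ref{odd_fact} via a simple parity computation relating the sign of a permutation to its number of cycles. Recall that the sign of a permutation $\sigma \in \mathfrak{S}_M$ can be expressed as $\mathrm{sgn}(\sigma) = (-1)^{M - \mathscr{C}(\sigma)}$, a standard identity which follows from the fact that a single $\ell$-cycle has sign $(-1)^{\ell-1}$, and summing the exponents over all cycles of $\sigma$ gives $\sum_i (\ell_i - 1) = M - \mathscr{C}(\sigma)$ since the cycle lengths $\ell_i$ partition $M$.

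First I would establish this sign formula: decompose $\sigma$ into its disjoint cycles $c_1, \dots, c_{\mathscr{C}(\sigma)}$ of lengths $\ell_1, \dots, \ell_{\mathscr{C}(\sigma)}$ (counting fixed points as $1$-cycles), note that each $\ell$-cycle is a product of $\ell - 1$ transpositions hence has sign $(-1)^{\ell-1}$, and multiply to get $\mathrm{sgn}(\sigma) = \prod_i (-1)^{\ell_i - 1} = (-1)^{\sum_i (\ell_i - 1)} = (-1)^{M - \mathscr{C}(\sigma)}$. Then, since $\sigma \in \mathfrak{A}_M$ means exactly $\mathrm{sgn}(\sigma) = 1$, this is equivalent to $(-1)^{M - \mathscr{C}(\sigma)} = 1$, i.e. $M - \mathscr{C}(\sigma)$ is even, i.e. $\mathscr{C}(\sigma) \equiv M \pmod 2$, which is the claimed equivalence.

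There is no real obstacle here; the only thing requiring minor care is the bookkeeping of fixed points, ensuring they are included as $1$-cycles so that the cycle lengths genuinely sum to $M$ (a $1$-cycle contributes $\ell - 1 = 0$ to the exponent and a sign of $+1$, so it is harmless, but it must be counted in $\mathscr{C}(\sigma)$ for the congruence to come out right). The statement about commutators then follows immediately: $[\sigma,\tau]$ is a product of two conjugate pairs $\sigma\tau\sigma^{-1} \cdot \tau^{-1}$, and since conjugate permutations have the same sign, $\mathrm{sgn}([\sigma,\tau]) = \mathrm{sgn}(\sigma\tau\sigma^{-1})\mathrm{sgn}(\tau^{-1}) = \mathrm{sgn}(\tau)\mathrm{sgn}(\tau)^{-1} = 1$, so $[\sigma,\tau] \in \mathfrak{A}_M$ and Fact~\ref{odd_fact} applies.
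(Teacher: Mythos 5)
Your proof is correct and is essentially the paper's own argument: both compute $\mathrm{sgn}(\sigma)$ as the product over cycles of $(-1)^{\ell_i-1}$ (the paper writes $(-1)^{C_i+1}$, which is the same parity) and conclude $\mathrm{sgn}(\sigma)=(-1)^{M-\mathscr{C}(\sigma)}$, from which the equivalence is immediate. Your extra care about counting fixed points as $1$-cycles and the closing remark that $[\sigma,\tau]\in\mathfrak{A}_M$ are both consistent with what the paper says around this Fact.
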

\begin{proof}
If $\sigma \in \mathfrak{S}_M$ has cycles of sizes $C_1, \dots, C_{\mathscr{C}(\sigma)}$, then
\be
\epsilon (\sigma) = \prod_{i=1}^{\mathscr{C}(\sigma)} (-1)^{C_i+1} = (-1)^{M + \mathscr{C}(\sigma)}
\ee
and the result follows directly.
\end{proof}
This remark allows us to compute the generating function of the number of cycles in a uniform element of $\mathfrak{A}_M$.

\begin{proposition}\label{odd_distr}
If $\nu$ is uniform in $\mathfrak{A}_M$, then
\be
\E_{\mathfrak{A}_M} \left( t^{\mathscr{C}(\nu)} \right)
= \frac{1}{M!} \left( F_{M}^+(t) + F_{M}^-(t) \right),
\ee
and if $\nu$ is uniform in $  \mathfrak{A}_M^c = \mathfrak{S}_M \backslash \mathfrak{A}_M$, then
\be
\E_{\mathfrak{A}_M^c} \left( t^{\mathscr{C}(\nu)} \right)
= \frac{1}{M!} \left( F_{M}^+(t) - F_{M}^-(t) \right).
\ee
\end{proposition}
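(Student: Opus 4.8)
The plan is to combine Theorem~\ref{thm:uniform_permutation} with Fact~\ref{odd_fact} by a simple parity-projection argument. Observe that $\mathfrak{A}_M$ consists precisely of the permutations $\sigma$ with $\mathscr{C}(\sigma) \equiv M \ [2]$, while $\mathfrak{A}_M^c$ consists of those with $\mathscr{C}(\sigma) \equiv M+1 \ [2]$. Since $|\mathfrak{A}_M| = |\mathfrak{A}_M^c| = M!/2$, a uniform element $\nu$ of $\mathfrak{A}_M$ has $\E_{\mathfrak{A}_M}(t^{\mathscr{C}(\nu)}) = \frac{2}{M!}\sum_{\sigma \in \mathfrak{S}_M, \, \epsilon(\sigma)=1} t^{\mathscr{C}(\sigma)}$, and similarly for the complement. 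The standard trick is to insert the indicator $\mathbbm{1}\{\epsilon(\sigma)=1\} = \tfrac12(1 + \epsilon(\sigma))$ and $\mathbbm{1}\{\epsilon(\sigma)=-1\} = \tfrac12(1 - \epsilon(\sigma))$ inside the uniform average over $\mathfrak{S}_M$.

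Concretely, I would write
\be
\E_{\mathfrak{A}_M}\!\left(t^{\mathscr{C}(\nu)}\right)
= \frac{1}{|\mathfrak{A}_M|}\sum_{\sigma \in \mathfrak{A}_M} t^{\mathscr{C}(\sigma)}
= \frac{2}{M!}\sum_{\sigma \in \mathfrak{S}_M} \frac{1+\epsilon(\sigma)}{2}\, t^{\mathscr{C}(\sigma)}
= \E_{\mathfrak{S}_M}\!\left(t^{\mathscr{C}(\sigma)}\right) + \E_{\mathfrak{S}_M}\!\left(\epsilon(\sigma)\, t^{\mathscr{C}(\sigma)}\right),
\ee
and the analogous identity with a minus sign for $\mathfrak{A}_M^c$. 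By Theorem~\ref{thm:uniform_permutation} (in the form \eqref{uniform_gen_fun_2}), the first term equals $\frac{1}{M!}F_M^+(t)$. For the second term, I would use $\epsilon(\sigma) = (-1)^{M+\mathscr{C}(\sigma)}$ from the proof of Fact~\ref{odd_fact}, so that $\epsilon(\sigma)\,t^{\mathscr{C}(\sigma)} = (-1)^M (-t)^{\mathscr{C}(\sigma)}$; hence
\be
\E_{\mathfrak{S}_M}\!\left(\epsilon(\sigma)\, t^{\mathscr{C}(\sigma)}\right)
= (-1)^M\, \E_{\mathfrak{S}_M}\!\left((-t)^{\mathscr{C}(\sigma)}\right)
= \frac{(-1)^M}{M!} F_M^+(-t)
= \frac{1}{M!} F_M^-(t),
\ee
where the last equality is the identity \eqref{falling_fact} relating $F_M^-$ and $F_M^+$. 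Adding (resp. subtracting) the two contributions gives $\frac{1}{M!}(F_M^+(t) + F_M^-(t))$ for $\mathfrak{A}_M$ and $\frac{1}{M!}(F_M^+(t) - F_M^-(t))$ for $\mathfrak{A}_M^c$, as claimed.

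There is no real obstacle here: the only point requiring a word of care is the claim $|\mathfrak{A}_M| = |\mathfrak{A}_M^c| = M!/2$, valid for $M \geq 2$ (for $M=1$ the group $\mathfrak{A}_1$ is all of $\mathfrak{S}_1$ and $\mathfrak{A}_1^c$ is empty, so the statement is read with the convention that the empty average is irrelevant, or one simply restricts to $M \geq 2$), together with checking the sign bookkeeping in $\epsilon(\sigma) = (-1)^{M+\mathscr{C}(\sigma)}$, which is exactly what was recorded in the proof of Fact~\ref{odd_fact}. Everything else is the substitution $t \mapsto -t$ in the known generating function \eqref{uniform_gen_fun_2}.
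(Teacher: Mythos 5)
Your proof is correct and is essentially the paper's own argument made explicit: the paper disposes of this proposition with a one-line appeal to ``conditioning on the parity of $\mathscr{C}(\sigma)$'' via Fact~\ref{odd_fact} and Theorem~\ref{thm:uniform_permutation}, and your computation with the indicators $\tfrac12(1\pm\epsilon(\sigma))$, the sign identity $\epsilon(\sigma)=(-1)^{M+\mathscr{C}(\sigma)}$, and the substitution $t\mapsto -t$ in \eqref{uniform_gen_fun_2} is precisely the algebra behind that conditioning. The bookkeeping, including the edge case $M=1$, is handled correctly.
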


\begin{proof}
These formulas follow from Theorem \ref{thm:uniform_permutation} and a conditioning argument coherent with Fact \ref{odd_fact}: a uniform even (resp. odd) permutation is a uniform permutation, conditionally on the number of cycle having the right parity.
\end{proof}

\begin{remark} The family $(F_n^+)_{n \geq 0}$ is an eigenbasis for the discrete differential operator
\be\label{def:discrete_differential}
\mathscr{D}: P(X) \mapsto P(X) - P(X-1).
\ee
Indeed, $\mathscr{D} F_n^+ = n F_{n-1}^+$. In particular, the integer values of $F_{n}^+$ can be written in the following form, for $n,k \geq 1$:
\begin{equation}\label{fact_identity}
F_{n}^+(k) = \sum_{j=1}^k \mathscr{D} F_{n}^+(j)
 = \sum_{j=1}^k n F_{n-1}^+ (j) = n \sum_{j=1}^k  \frac{\Gamma(n+j-1)}{\Gamma(j)}.
\end{equation}
\end{remark}

\section{Random matrices}\label{sec:rmt}

This section presents a few key facts about the relevant ensembles of non-Hermitian random matrices; we then apply these to provide the proofs of the results announced in Section \ref{sec:rp}. We consider the following two types of Gaussian random matrices, related to the complex and the real Ginibre ensembles; the difference being that, for the sake of clarity, we work with \textit{unscaled} matrices, meaning that individual entries have variance $1$ instead of the usual $N^{-1/2}$. Throughout this section, then,
\begin{enumerate}
    \item $G$ is an $N \times N$ complex Gaussian matrix, by which we mean that the entries $G_{ij}$ are i.i.d. standard complex Gaussian random variable.
\item $R$ is an $N \times N$ real Gaussian matrix, by which we mean that the entries $R_{ij}$ are i.i.d. standard real Gaussian random variable. 
\end{enumerate}

The fact that all entries of these matrices are Gaussian makes it possible to access some relevant statistics using Wick's law, leading to a topological understanding of the contributions -- an approach known as \textit{genus expansion}. Some consequences of genus expansion for non-Hermitian matrices have been exposed in \cite{DubachPeled}, where a new conceptual proof of Theorem \ref{thm:uniform_permutation} was mentioned as a byproduct (p.9). Although this proof may seem a bit far-fetched (considering that the same propety follows from Feller coupling, which is a more `natural' explanation), we recall it here as it serves as a first example of the use of random matrices to answer questions about random permutations.

\begin{proof}[Proof of Theorem \ref{thm:uniform_permutation} via random matrix theory.]
 For $G$ a complex Gaussian matrix, we have, using the independence of the entries and the additivity property of independent gamma variables,
\begin{equation}
 \tr \left( G^* G \right) 
 = \sum_{i,j=1}^N |G_{ij}|^2 
 \disteq  \sum_{i,j=1}^N \gamma_1
 \disteq \gamma_{N^2},
\end{equation}
 so that the moments are given explicitly by
 $$
\E \left( \tr \left( G^* G \right)^M \right) 
= \E \left( \gamma_{N^2}^M \right)
= \frac{\Gamma(N^2 + M)}{\Gamma(N^2)}
= F_{M}^+ (N^2).
 $$
 On the other hand, we have by genus expansion,
 $$
 \E \left( \tr \left( G^* G \right)^M \right) 
 =
 \sum_{\phi \in \AdPair( G^*G, \dots, G^*G )} N^{2c(S_{\phi}) - 2g (S\phi)},
 $$
using the notations of \cite{DubachPeled}, namely: $\phi$ is an admissible pairing of edges, $S_{\phi}$ the resulting surface, $c(S_{\phi})$ its number of connected components, and $g(S_{\phi})$ the sum of their genera. In that case, as illustrated on Figure \ref{fig:Example1}, each surface $S_\phi$ obtained by the gluing of a certain numbers of $2$-gons is simply a union of spheres, so that $g (S_{\phi}) =0$. Moreover, there is a straightforward one-to-one correspondence between admissible pairings $\phi$ and elements $\si \in \mathfrak{S}_M$: the $G$-edge of face $j$ is matched with the $G^*$-edge of face $\sigma(j)$. In particular, $c (S_{\phi})= \mathscr{C}(\sigma)$. This allows us to write
 $$
 \E_{\sigma} \left( N^{2 \mathscr{C}(\sigma)} \right) 
 = \frac{1}{M!} \sum_{\sigma \in \mathfrak{S}_M} N^{2 \mathscr{C}(\sigma)}
 = \E_G \left( \tr \left( G^* G \right)^M \right) 
 = \frac{1}{M!} F_{M}^+ (N^2).
 $$
The above equality holds for any $N, M \geq 1$, so that it characterizes all relevant generating functions and proves the claim.
\end{proof}

\begin{figure}[ht]
\begin{center}
\begin{tikzpicture}
\draw (-5.3,1.7) node {$\sigma=\rm{Id} = (1)(2)(3)$};
\draw (-5.3,0) node {
\begin{tikzpicture}[scale=1.1]
\begin{scope}[thick,decoration={
    markings,
    mark=at position 0.51 with {\arrow{>}}}
    ]
\draw[thick,color=blue, postaction={decorate}] (-1,0.2) to[bend right=50] (-.8,0.7) to[bend right=50] (-1.2,0.7) to[bend right=50] (-1,0.2) ;
\draw[thick,color=blue, postaction={decorate}] (1,0.2) to[bend right=50] (1.2,0.7) to[bend right=50] (.8,0.7) to[bend right=50] (1,0.2) ;
\draw[thick,color=blue, postaction={decorate}] (0,-.8) to[bend left=50] (-.2,-.3) to[bend left=50] (.2,-.3) to[bend left=50] (0,-.8) ;
\filldraw[fill=black!5!white,draw=white] (-1,-0.5) to[bend right=50] (-1,0.5) to[bend right=50] (-1,-0.5);
\draw[postaction={decorate}] (-1,-0.5) to[bend right=50] (-1,0.5);
\draw[postaction={decorate}] (-1,-0.5) to[bend left=50] (-1,.5);
\filldraw[fill=black!5!white,draw=white] (1,-0.5) to[bend right=50] (1,0.5) to[bend right=50] (1,-0.5);
\draw[postaction={decorate}] (1,-0.5) to[bend right=50] (1,0.5);
\draw[postaction={decorate}] (1,-0.5) to[bend left=50] (1,.5);
\filldraw[fill=black!5!white,draw=white] (0,-1.5) to[bend right=50] (0,-.5) to[bend right=50] (0,-1.5);
\draw[postaction={decorate}] (0,-.5) to[bend left=50] (0,-1.5);
\draw[postaction={decorate}] (0,-.5) to[bend right=50] (0,-1.5);
\draw (-.5,0) node {$G$};
\draw (-1.5,0) node {$G^*$};
\draw (1.5,0) node {$G$};
\draw (.5,0) node {$G^*$};
\draw (.5,-1) node {$G^*$};
\draw (-.5,-1) node {$G$};
\draw (-1,0) node {$1$};
\draw (1,0) node {$2$};
\draw (0,-1) node {$3$};
\end{scope}
\end{tikzpicture}
}; 
\draw (0,1.7) node {$\sigma=(1,2)(3)$};
\draw (0,-.2) node {
\begin{tikzpicture}[scale=1.1]
\begin{scope}[thick,decoration={
    markings,
    mark=at position 0.51 with {\arrow{>}}}
    ] 
\draw[thick,color=blue,postaction={decorate}] (-1,0) to[bend right=40] (1,0);
\draw[thick,color=blue,postaction={decorate}] (1,.2) to[bend right=90] (1.1,.7) to[bend right=20] (-1.1,.7) to[bend right=90] (-1,.2);
\draw[thick,color=blue,postaction={decorate}] (0,-1.2) to[bend right=50] (-.2,-1.7) to[bend right=50] (.2,-1.7) to[bend right=50] (0,-1.2) ;
\filldraw[fill=black!5!white,draw=white] (-1,-0.5) to[bend left=45] (-1,0.5) to[bend left=45] (-1,-0.5);
\draw[postaction={decorate}] (-1,-0.5) to[bend right=50] (-1,0.5);
\draw[postaction={decorate}] (-1,-0.5) to[bend left=50] (-1,.5);
\filldraw[fill=black!5!white,draw=white] (1,-0.5) to[bend right=50] (1,0.5) to[bend right=50] (1,-0.5);
\draw[postaction={decorate}] (1,-0.5) to[bend right=50] (1,0.5);
\draw[postaction={decorate}] (1,-0.5) to[bend left=50] (1,.5);
\filldraw[fill=black!5!white,draw=white] (0,-1.5) to[bend right=50] (0,-.5) to[bend right=50] (0,-1.5);
\draw[postaction={decorate}] (0,-.5) to[bend left=50] (0,-1.5);
\draw[postaction={decorate}] (0,-.5) to[bend right=50] (0,-1.5);
\draw (-.5,0) node {$G$};
\draw (-1.5,0) node {$G^*$};
\draw (1.5,0) node {$G$};
\draw (.5,0) node {$G^*$};
\draw (.5,-1) node {$G^*$};
\draw (-.5,-1) node {$G$};
\draw (-1,0) node {$1$};
\draw (1,0) node {$2$};
\draw (0,-1) node {$3$};
\end{scope}
\end{tikzpicture}
};
\draw (5.3,1.7) node {$\sigma=(1,2,3)$};
\draw (5.3,-.1) node {
\begin{tikzpicture}[scale=1.1]
\begin{scope}[thick,decoration={
    markings,
    mark=at position 0.51 with {\arrow{>}}}
    ] 
\draw[thick,color=blue,postaction={decorate}] (-1,.1) to[bend left=20] (1,.1);
\draw[thick,color=blue,postaction={decorate}] (1,0) to[bend left=60] (1.5,-.5) to[bend left=40] (0,-1.3);
\draw[thick,color=blue,postaction={decorate}] (0,-1.3) to[bend left=40] (-1.5,-.5) to[bend left=60] (-1,0);
\filldraw[fill=black!5!white,draw=white] (-1,-0.5) to[bend right=50] (-1,0.5) to[bend right=50] (-1,-0.5);
\draw[postaction={decorate}] (-1,-0.5) to[bend right=50] (-1,0.5);
\draw[postaction={decorate}] (-1,-0.5) to[bend left=50] (-1,.5);
\filldraw[fill=black!5!white,draw=white] (1,-0.5) to[bend right=45] (1,0.5) to[bend right=50] (1,-0.5);
\draw[postaction={decorate}] (1,-0.5) to[bend right=50] (1,0.5);
\draw[postaction={decorate}] (1,-0.5) to[bend left=50] (1,.5);
\filldraw[fill=black!5!white,draw=white] (0,-1.5) to[bend right=50] (0,-.5) to[bend right=50] (0,-1.5);
\draw[postaction={decorate}] (0,-.5) to[bend left=50] (0,-1.5);
\draw[postaction={decorate}] (0,-.5) to[bend right=50] (0,-1.5);
\draw (-.5,0) node {$G$};
\draw (-1.5,0) node {$G^*$};
\draw (1.5,0) node {$G$};
\draw (.5,0) node {$G^*$};
\draw (.5,-1) node {$G^*$};
\draw (-.5,-1) node {$G$};
\draw (-1,0) node {$1$};
\draw (1,0) node {$2$};
\draw (0,-1) node {$3$};
\end{scope}
\end{tikzpicture}
};
\end{tikzpicture}
\end{center}
\vspace{-.2in}
\caption{Illustration of the random matrix proof of Theorem \ref{thm:uniform_permutation} for $M=3$. We represent three admissible pairings (out of a total of $6$) corresponding to the identity, a transposition and a $3$-cycle; the resulting surfaces are unions of spheres: three, two and one respectively.}
\label{fig:Example1}
\end{figure}
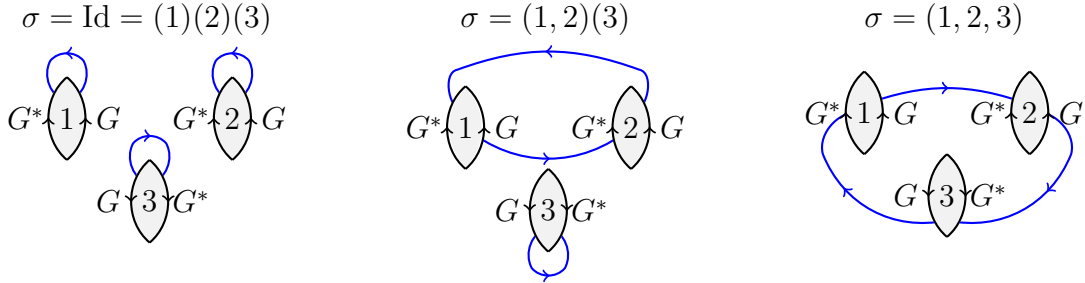

The proof of Theorem \ref{thm:many_trans} relies on the same phenomenon, with a real Gaussian matrix $R$ (see Section \ref{sec:last}). The proof of Theorems \ref{thm:one_cycle} and \ref{thm:two_cycles} on the other hand rely on another feature, specific to the complex Gaussian matrix $G$, namely: the decorrelation of high powers of its eigenvalues. This fact is known to hold more generally for determinantal point processes with radial symmetry in the complex plane.

\begin{theorem}[Hough, Krishnapur, Peres, Vir\'ag]\label{thm:high_powers}
Let $\{ \la_1, \cdots, \la_N \}$ be the set of eigenvalues of $G$ defined as above. For any $M \geq N$, the following identity in distribution holds
\begin{equation}
\{ \la_1^M, \cdots, \la_N^M \} \disteq \{ \gamma_1^M e^{i \theta_1}, \dots, \gamma_N^M e^{i \theta_N} \}
\end{equation}
where the $\theta_i$'s are i.i.d. \! real variables uniform in $[0,2\pi]$, and $\gamma_1, \dots, \gamma_N$ are independent gamma variables with corresponding parameters $1, \dots, N$.
\end{theorem}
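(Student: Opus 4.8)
The plan is to derive the identity directly from the explicit joint density of the eigenvalues of $G$, passing to polar coordinates and computing the push-forward of this density under the map $\lambda\mapsto\lambda^M$. Recall that the eigenvalues of the unscaled complex Gaussian matrix $G$ have joint density proportional to $\prod_{i<j}|\lambda_i-\lambda_j|^2\prod_i e^{-|\lambda_i|^2}$, and that the Vandermonde determinant gives $\prod_{i<j}(\lambda_i-\lambda_j)=\sum_{\pi\in\mathfrak{S}_N}\mathrm{sgn}(\pi)\prod_{j=1}^N\lambda_j^{\pi(j)-1}$. Writing $\lambda_j=r_je^{i\phi_j}$, the squared Vandermonde expands as $\sum_{\pi,\sigma\in\mathfrak{S}_N}\mathrm{sgn}(\pi)\mathrm{sgn}(\sigma)\prod_j r_j^{\pi(j)+\sigma(j)-2}e^{i(\pi(j)-\sigma(j))\phi_j}$. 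First I would record this expansion together with Kostlan's observation --- obtained by integrating it over all angles --- that the unordered family $\{|\lambda_j|^2\}$ has the law of a set of independent gamma variables with parameters $1,\dots,N$ (equivalently, the spectrum of $G$ is a rotation-invariant projection determinantal process).

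The key step is to follow the angular part under $\lambda\mapsto\lambda^M$, i.e.\ under $\phi_j\mapsto M\phi_j\bmod 2\pi$, the radial map $r_j\mapsto r_j^M$ being an explicit bijection that can be undone at the end. The $M$-to-$1$ folding of the circle sends $e^{ik\phi}\,\tfrac{d\phi}{2\pi}$ to $\1[M\mid k]\,e^{i(k/M)\psi}\,\tfrac{d\psi}{2\pi}$, since $\tfrac1M\sum_{m=0}^{M-1}e^{2\pi ikm/M}=\1[M\mid k]$. Applying this in each of the $N$ variables, every term of the expansion indexed by a pair $(\pi,\sigma)$ with $\pi(j)\neq\sigma(j)$ for some $j$ is annihilated: then $M\nmid(\pi(j)-\sigma(j))$ because $1\le\pi(j),\sigma(j)\le N\le M$ forces $|\pi(j)-\sigma(j)|<M$. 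Only the diagonal terms $\pi=\sigma$ survive, each carrying a trivial (constant) angular factor. This is exactly where the hypothesis $M\ge N$ enters; for $M<N$ further pairs survive and the statement fails.

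Once the folding is carried out, the density of $\{\lambda_j^M\}$ in polar coordinates is proportional to $\bigl(\sum_{\pi\in\mathfrak{S}_N}\prod_j(r_j^2)^{\pi(j)-1}\bigr)\prod_j e^{-r_j^2}$ times the uniform angular measure $\prod_j\tfrac{d\psi_j}{2\pi}$ --- a permanent in the squared moduli against a Gaussian weight, with the angles now decoupled. Two things follow: the arguments of the $\lambda_j^M$ are i.i.d.\ uniform on $[0,2\pi]$ and independent of the moduli; and, after the substitution $s_j=r_j^2$, the law of $\{s_j\}$ has density proportional to $\mathrm{perm}(s_j^{k-1})\prod_j e^{-s_j}$, which is exactly the symmetrization of $\prod_{k=1}^N\tfrac{1}{\Gamma(k)}s_k^{k-1}e^{-s_k}$ --- the unordered family of independent gammas with parameters $1,\dots,N$ (this last identification may also be quoted directly as Kostlan's theorem). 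Combining the two statements yields the claimed identity in distribution. The main obstacle --- in fact essentially the only delicate point --- is the bookkeeping of the folding step: setting up the change of variables on the torus correctly and verifying that $M\ge N$ collapses the double sum onto its diagonal; after that, the remaining computation is the classical one.
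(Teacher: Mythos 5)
The paper offers no proof of this statement: it is quoted from Hough--Krishnapur--Peres--Vir\'ag, with \cite{HKPV} and \cite{DubachPowers} cited for the argument. So there is nothing internal to compare against; judged on its own terms, your proof is correct and is essentially the standard derivation. The three ingredients all check out: the expansion of $|\Delta(\la)|^2$ over pairs $(\pi,\sigma)\in\mathfrak{S}_N^2$, the Fourier computation showing that the $M$-fold covering of the circle pushes $e^{ik\phi}\,\tfrac{d\phi}{2\pi}$ to $\1[M\mid k]\,e^{i(k/M)\psi}\,\tfrac{d\psi}{2\pi}$, and the observation that $M\ge N$ forces $|\pi(j)-\sigma(j)|\le N-1<M$, so only the diagonal $\pi=\sigma$ survives and the density factorizes into a permanent in the $r_j^2$ times the uniform angular measure. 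This correctly isolates where $M\ge N$ is used and reduces the radial part to Kostlan's theorem. One small but worthwhile remark: what your computation actually delivers is $\{\la_1^M,\dots,\la_N^M\}\disteq\{\gamma_1^{M/2}e^{i\theta_1},\dots,\gamma_N^{M/2}e^{i\theta_N}\}$, since $|\la_k|^2\disteq\gamma_k$ gives $|\la_k^M|=\gamma_k^{M/2}$; this is the version consistent with how the theorem is used later in the paper (e.g.\ $\E|\la_i|^{2M}=\Gamma(M+i)/\Gamma(i)$), so the exponent $M$ on the $\gamma_i$'s in the displayed statement should be read as $M/2$. Your argument proves the correct (used) form of the identity.
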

See \cite{HKPV} for a general presentation of this rather counter-intuitive property, and \cite{DubachPowers} for a generalization to small powers $M < N$. \medskip

The following lemma is the essential link between complex Gaussian random matrices and random permutations that will be used repeatedly in this note.

\begin{lemma}\label{lem:corresp}
If $C_1 + \cdots+ C_K =M$, and $G$ is a complex Gaussian matrix as defined above, then
\begin{equation}
\E_{G} \left( \prod_{k=1}^K \left| \tr G^{C_k} \right|^2 \right)
=
M! \
\E_{\sigma} \left( N^{\mathscr{C}([\sigma, \tau])} \right),
\end{equation}
where $\tau$ has $K$ cycles of respective sizes $C_1, \dots C_K$, and $\sigma \in \mathfrak{S}_M$ is a uniform permutation.
\end{lemma}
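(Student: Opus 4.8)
The plan is to compute the left-hand side by Wick's theorem (i.e.\ genus expansion) and then reorganise the resulting sum over pairings as a sum over $\mathfrak{S}_M$, along the lines of the proof of Theorem~\ref{thm:uniform_permutation} recalled above and using the notations of \cite{DubachPeled} only implicitly. Fix a permutation $\tau$ of $[M]:=\{1,\dots,M\}$ with $K$ cycles $\mathfrak{c}_1,\dots,\mathfrak{c}_K$ of sizes $C_1,\dots,C_K$. Unfolding each trace, $\tr G^{C_k}=\sum_{x}\prod_{j\in\mathfrak{c}_k}G_{x_j\,x_{\tau(j)}}$, where $x$ runs over maps $\mathfrak{c}_k\to[N]$; concatenating over $k$ gives $\prod_{k}\tr G^{C_k}=\sum_{x\colon[M]\to[N]}\prod_{j=1}^M G_{x_j\,x_{\tau(j)}}$ and, by conjugation, $\prod_k\overline{\tr G^{C_k}}=\sum_{y\colon[M]\to[N]}\prod_{j=1}^M \overline{G_{y_j\,y_{\tau(j)}}}$. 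Thus $\prod_k\lvert\tr G^{C_k}\rvert^2$ is a sum, over pairs of index maps $(x,y)$, of a monomial in $M$ entries of $G$ and $M$ entries of $\overline G$ whose row-to-column incidence is dictated by $\tau$.

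Since the entries of $G$ are standard complex Gaussian, $\E(G_{ab}\overline{G_{cd}})=\delta_{ac}\delta_{bd}$ and $\E(G_{ab}G_{cd})=0$, so Wick's formula expresses $\E_G\big(\prod_{j}G_{x_j\,x_{\tau(j)}}\,\overline{G_{y_j\,y_{\tau(j)}}}\big)$ as $\sum_{\pi\in\mathfrak{S}_M}\prod_{j=1}^M\delta_{x_j,\,y_{\pi(j)}}\,\delta_{x_{\tau(j)},\,y_{\tau(\pi(j))}}$, the sum running over bijections $\pi$ from the $M$ factors $G$ to the $M$ factors $\overline G$.

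It remains to sum, for each fixed $\pi$, this product of Kronecker deltas over all $(x,y)$. The first family of constraints reads $x_j=y_{\pi(j)}$ for every $j$, and the second reads $x_{\tau(j)}=y_{\tau(\pi(j))}$, i.e.\ $x_j=y_{\tau\pi\tau^{-1}(j)}$ for every $j$; combining them forces $y_i=y_{\rho(i)}$ for $\rho:=\tau\pi\tau^{-1}\pi^{-1}=[\tau,\pi]$, after which $x$ is recovered from $y$ by $x_j=y_{\pi(j)}$, and conversely any $\rho$-invariant $y$ yields an admissible pair. Hence the sum over $(x,y)$ contributes exactly $N^{\mathscr{C}([\tau,\pi])}$ — one factor of $N$ per cycle of the commutator — and therefore
\[
\E_G\Big(\prod_{k=1}^K \lvert\tr G^{C_k}\rvert^2\Big)=\sum_{\pi\in\mathfrak{S}_M}N^{\mathscr{C}([\tau,\pi])}.
\]
Finally, since $[\tau,\pi]^{-1}=[\pi,\tau]$ and a permutation has the same number of cycles as its inverse, $\mathscr{C}([\tau,\pi])=\mathscr{C}([\pi,\tau])$; letting $\pi$ play the role of the uniform permutation $\sigma$ turns the right-hand side into $M!\,\E_\sigma\big(N^{\mathscr{C}([\sigma,\tau])}\big)$, which is the claim.

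The one delicate point is the index count in the third step: one must keep track of precisely which identifications the two families of deltas impose and recognise that together they assert exactly that $y$ is constant on the cycles of $[\tau,\pi]$ — equivalently, that the face structure of the ribbon graph attached to $\pi$ is counted by $\mathscr{C}([\tau,\pi])$. No genus truncation is needed here: the identity is exact in $N$, and the non-planar gluings are precisely those producing commutators $[\tau,\pi]$ with fewer than the maximal number of cycles.
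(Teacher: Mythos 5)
Your proof is correct, and it is essentially the paper's argument: both expand the product of traces, apply Wick's formula to index the surviving terms by a bijection $\pi$ between the $G$- and $G^*$-factors, and observe that the free indices (the "vertices" of the glued surface in the paper's diagrammatic language) are counted by the cycles of the commutator, giving $\sum_\pi N^{\mathscr{C}([\tau,\pi])}$. The only difference is presentational — you carry out the Kronecker-delta bookkeeping explicitly and self-containedly, where the paper delegates the Wick/genus-expansion setup to \cite{DubachPeled} and phrases the index identification as vertex gluing — and your key step (the two families of deltas together assert exactly that $y$ is constant on the orbits of $\tau\pi\tau^{-1}\pi^{-1}$) is verified correctly.
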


\begin{proof}
This follows from the diagramatic approach for non-Hermitian matrices presented in \cite{DubachPeled}, and the following remarks. First, we choose an arbitrary numbering of the $G$-edges from $1$ to $M$, and also number the $G^*$-edges symmetrically. To each admissible pairing (i.e. a bipartite pairing between the $G$-edges and the $G^*$-edges), one associates the permutation $\sigma \in \mathfrak{S}_M$ such that $\sigma(i)=j$ if and only if the $G$-edge number $i$ is paired with the $G^*$-edge number $j$. We call $\tau$ the fixed permutation such that $\tau(i)=j$ if and only if the $G$-edge $i$ follows directly the $G$-edge $j$ in the diagram, which implies that $\tau$ has the corresponding cycle structure. The key remark is then that
\begin{equation}
V(S_{\phi}) = \mathscr{C}([\sigma, \tau]),
\end{equation}
which can be justified in the following way: there are initially $M$ vertices of the $G$-edges, which eventually will be associated to the symmetric $M$ vertices of the $G^*$-edges; $V(S_{\phi})$ being the number of vertices remaining after the associations. It is enough to count the cycles of associations created in the $G$-vertices. If one defines `vertex number $i$' as the end-vertex of the oriented edge number $i$, the pairing associated to $\sigma$ forces the assimilation of the $G$-vertex $i$ with the $G^*$-vertex number $\tau (\sigma(i))$, and consequently the assimilation of the $G$-vertices encoded in permutation $\tau^{-1} \sigma^{-1} \tau \sigma$. Therefore, the number of free vertices in the end is the number of cycles in the commutator $[\sigma, \tau]$.
\end{proof}

\begin{remark}
We can also write, using Euler's identity:
\begin{equation}
\mathscr{C}([\sigma,\tau]) 
= V(S_{\phi})
= \chi(S_{\phi}) + E(S_{\phi}) -F(S_{\phi}) = M - 2(g(S_{\phi}) + K - c(S_{\phi}))
\end{equation}
It follows, in particular, that $M- \mathscr{C}([\si,\tau])$ is even, coherently with Fact \ref{odd_fact}.
\end{remark}

A closely related random matrix interpretation has been previously suggested by Alexeev \& Zograf \cites{Alexeev2011, Alexeev2014}, who related Hultman numbers to the quantities $\E \tr ( G^M G^{*M} )$, and therefore to the singular values of $G^M$. Lemma \ref{lem:corresp}, on the other hand, relates Hultman numbers to $\E |\tr G^M|^2$, and more generally, statistics of cycles of commutators to eigenvalues of powers of $G$. It is this choice of statistics that enables explicit computations, for instance \textit{via} Theorem \ref{thm:high_powers}.

\subsection{Proof of Theorem \ref{thm:one_cycle}: one large cycle}

\begin{proof}[Proof of Theorem \ref{thm:one_cycle}]
Let $M \geq 1$ be a fixed integer. We denote
$
P(t) := \E_{\sigma} \left( t^{\mathscr{C}([\si,\tau])} \right)
$
the generating function of the number of cycles of a commutator between a uniform permutation $\sigma$ and a large cycle $\tau$ in $\mathfrak{S}_M$. Clearly, $P$ is a polynomial of degree $M$, and $P(0)=0$. For any $N\geq 1$, let $G$ be a complex Gaussian matrix of size $N \times N$. Lemma \ref{lem:corresp} gives
\begin{equation}
 P(N) = \frac{1}{M!} \E_G | \tr G^M |^2.
\end{equation}
We compute this quantity using Theorem \ref{thm:high_powers}, for any $1 \leq N \leq M$:
\begin{equation}
\E | \tr G^M |^2 = \sum_{i=1}^N \E |\la_i|^{2M} = \sum_{i=1}^N \frac{\Ga(M+i)}{\Ga(i)},
\end{equation}
so that $P(0) = 0$ and for every $1 \leq N \leq M$, using (\ref{fact_identity}),
\begin{equation}
P (N) = \frac{1}{M!} \sum_{i=1}^N \frac{\Ga(M+i)}{\Ga (i)} = \frac{1}{(M+1)!} F_{M+1}^+(N).
\end{equation}
Thus, the polynomial 
$$
F_{M+1}^+(t) - (M+1)! P(t)
$$
is of degree $M+1$ and cancels at $t=0, 1, \dots, M$. Therefore it is a multiple of $F_{M+1}^-$. By inspection of the leading coefficient, it is exactly $F_{M+1}^-$. The main result follows; consequence \eqref{odd_interpretation} holds by Proposition \ref{odd_distr}.
\end{proof}

One remarquable consequence of this identity is the fact that $\mathscr{C}([\sigma,\tau])$ is distributed like a sum of independent Bernoulli variables, although their parameters are not as straightforward as those appearing in Theorems \ref{thm:uniform_permutation} and \ref{thm:many_trans}.

\begin{corollary}
If $\tau = (1, \dots, M) \in \mathfrak{S}_M$, there are parameters $(p_j)_{j=1}^{\lfloor M/2 \rfloor}$ such that
$$
M - \mathscr{C}([\si,\tau]) \disteq \sum_{j=1}^{\lfloor M-1/2 \rfloor } 2 B_{p_j}
$$
where the $B_p$'s are independent Bernoulli variables.
\end{corollary}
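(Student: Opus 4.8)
The plan is to read off the Bernoulli decomposition directly from the factored form of the generating function in Theorem \ref{thm:one_cycle}, exactly as one reads Theorem \ref{thm:uniform_permutation} off the polynomial $F_M^+$. Set $Q(t) := \E\left( t^{M - \mathscr{C}([\si,\tau])} \right) = t^M P(1/t)$, where $P$ is as in the proof of Theorem \ref{thm:one_cycle}. Since $M - \mathscr{C}([\si,\tau])$ is always even and nonnegative (by the remark following Lemma \ref{lem:corresp}, or Fact \ref{odd_fact}), $Q$ is in fact a polynomial in $t^2$ of degree $2\lfloor (M-1)/2\rfloor$ with nonnegative coefficients summing to $1$; write $Q(t) = \widetilde{Q}(t^2)$. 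The goal is to show $\widetilde{Q}(s) = \prod_{j=1}^{\lfloor (M-1)/2\rfloor}\big((1-p_j) + p_j s\big)$ for suitable $p_j \in [0,1]$, i.e. that $\widetilde{Q}$ has only real nonpositive roots when viewed as a polynomial in $s$ (equivalently, $Q$ has only real roots in $t$, occurring in $\pm$ pairs).

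First I would translate \eqref{one_cycle_gen_fun} into a statement about $Q$. From $(M+1)! P(t) = F_{M+1}^+(t) - F_{M+1}^-(t)$ and the relations $F_{M+1}^+(t) = t(t+1)\cdots(t+M)$, $F_{M+1}^-(t) = t(t-1)\cdots(t-M)$, one gets after the substitution $t \mapsto 1/t$ and clearing denominators that $(M+1)!\, Q(t)$ equals $\prod_{k=1}^M (1+kt) - \prod_{k=1}^{M}(1-kt)$, times an overall sign/normalization I would fix by checking $Q(1) = 1$. So up to the constant the problem is: \emph{show that $g(t) := \prod_{k=1}^M(1+kt) - \prod_{k=1}^M(1-kt)$ has only real roots.} Note $g$ is an odd polynomial of degree $M$ (the leading terms cancel when $M$ is even; when $M$ is odd they add), with $g(0)=0$, so $g(t)/t$ is even of degree $2\lfloor (M-1)/2\rfloor$, matching the claimed count of Bernoulli factors.

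The key step is the real-rootedness of $g$, and I expect this to be the main obstacle. The natural approach is an interlacing / sign-change argument: evaluate $g$ at the points $t = -1/k$ for $k = 1, \dots, M$, where the first product vanishes, so $g(-1/k) = -\prod_{\ell=1}^M(1 - \ell/k)$, a quantity whose sign is $(-1)^{\#\{\ell : \ell < k\}} = (-1)^{k-1}$ (the factor $\ell = k$ vanishes — so in fact $g(-1/k)=0$!). That means $-1/1, \dots, -1/M$ are \emph{themselves} roots of $g$; but $g$ has degree $\le M$ and $t=0$ must be checked separately. Reconciling the count — $g$ vanishes at $0$ and at $-1/1, \dots, -1/M$, which is $M+1$ points for a degree-$\le M$ polynomial — forces $g \equiv 0$ unless some of these coincide, which they do not; so the resolution must be that these are not all simple, or (more likely) that I have the wrong polynomial and the correct $g$ after careful bookkeeping of the $t^M$-substitution is $\prod_{k=0}^{M-1}(1+kt)$-type, i.e. involves $k$ ranging over a set that makes $g$ have degree exactly $M+1$ before dividing by $t$. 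The cleanest route, then, is: write $(M+1)!\,Q(t) = \prod_{k=0}^{M}(1+kt) - \prod_{k=0}^{M}(1-kt)$ (degree $M+1$, odd, with the $k=0$ factor trivial), observe it vanishes at $t = 0$ and at $t = \pm 1/k$ is where the products vanish, and run a standard interlacing argument on the $M+1$ points $0, \pm 1/1, \pm 1/2, \dots$ to locate a full set of real roots; then divide by $t$ and by the $\pm$-pairing to extract $\lfloor (M-1)/2\rfloor$ values $p_j = $ (root of $\widetilde Q$ in $s$) to conclude. Alternatively — and this is the fallback I would actually write if the interlacing is fiddly — invoke that a polynomial with nonnegative coefficients is a generating function of a sum of independent Bernoullis iff it is real-rooted, and prove real-rootedness of $\widetilde Q(s)$ by noting $\widetilde Q(s)$ is, up to normalization, $\frac{1}{2\sqrt s}\big(\prod_k(1+k\sqrt s) - \prod_k(1-k\sqrt s)\big)$, which is $\frac{1}{\sqrt s}$ times the odd part of $\prod_k(1 + k\sqrt s)$; the odd part of a real-rooted polynomial with all roots of one sign need not be real-rooted in general, so the honest argument genuinely requires the interlacing computation above, and that computation — pinning down the exact sign pattern of $g$ at the reciprocals $-1/k$ — is the crux.
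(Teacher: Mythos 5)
Your proposal misidentifies the property that needs to be proved, and the computation you flag as ``the crux'' cannot be carried out because it rests on an arithmetic slip. For $Q(t)=\E\big(t^{M-\mathscr{C}([\sigma,\tau])}\big)=\widetilde Q(t^2)$ to factor as $\prod_j\big((1-p_j)+p_j s\big)$ with $s=t^2$, the polynomial $\widetilde Q$ must have \emph{real nonpositive} roots in $s$, which means $Q$ --- and hence $g$ and $P_M=F_{M+1}^+-F_{M+1}^-$ --- must have \emph{purely imaginary} roots in $t$; your parenthetical ``equivalently, $Q$ has only real roots in $t$, occurring in $\pm$ pairs'' is backwards (real roots of $Q$ would correspond to \emph{nonnegative} roots of $\widetilde Q$, which is not a Bernoulli factorization). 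In fact $Q$ has no real zeros at all: it is even, has nonnegative coefficients, and $Q(0)=\P(\mathscr{C}([\sigma,\tau])=M)>0$. Relatedly, your evaluation $g(-1/k)=0$ is wrong: at $t=-1/k$ only the first product vanishes, so $g(-1/k)=-\prod_{\ell=1}^{M}(1+\ell/k)<0$. These values all have the same sign, so the sign-change/interlacing argument you defer to produces nothing --- consistently with the fact that there are no nonzero real roots to find. The proposal therefore never establishes the one nontrivial fact the corollary needs.

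What is actually required is a Lee--Yang statement: all nonzero roots of $P_M=F_{M+1}^+-F_{M+1}^-$ lie on the imaginary axis. The paper's argument is short: if $z\neq 0$ and $F_{M+1}^+(z)=F_{M+1}^-(z)$, then (after cancelling the common factor $z$ and noting $z\notin\{\pm1,\dots,\pm M\}$) one gets $\prod_{j=1}^{M}|z+j|=\prod_{j=1}^{M}|z-j|$; since $|z+j|^2-|z-j|^2=4j\,\Re z$ has the sign of $\Re z$ for every $j$, the product of the ratios $|z+j|/|z-j|$ is $>1$ when $\Re z>0$ and $<1$ when $\Re z<0$, forcing $\Re z=0$. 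Writing the real factorization $P_M(X)=X^{1+\delta_{2\mid M}}\prod_{j}\big(X^2+|z_j|^2\big)$ (the multiplicity of the root at $0$ being fixed by the parity of $P_M$), each normalized quadratic factor $\big(X^2+|z_j|^2\big)/\big(1+|z_j|^2\big)$ is the generating function of $2B_{p_j}$ with $p_j=1/(1+|z_j|^2)$, and the corollary follows. If you want to salvage your route, replace ``show $g$ is real-rooted'' by ``show $g$ has only purely imaginary roots'' and prove it by exactly this modulus comparison; the reciprocal substitution $t\mapsto 1/t$ you perform is then harmless but also unnecessary.
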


\begin{proof}
The generating function (\ref{one_cycle_gen_fun}) satisfies the Lee-Yang property, i.e. it has only purely imaginary zeros. Indeed, if $\Re z > 0$, then for every $j=1, \dots, M$, $|z-j|>|z+j|$. But if $z \neq 0$ is a zero of $P_M = F_{M+1}^+ - F_{M+1}^-$, then obviously $z \neq 1, \dots, M$ and
$$
1 = \left| \frac{F_M^+ (z)}{F_M^- (z)} \right| = \prod_{j=1}^M \frac{|z+j|}{|z-j|} < 1
$$
which is a contradiction. The conclusion follows by using the parity of $P_M$ (same as $M$). It follows that the decomposition of $P_M$ as a product of irreducible polynomials in $\R[X]$ is
$$
P_M(X) = X^{1 + \delta_{2 | M}} \prod_{j=1}^{\lfloor M/2 \rfloor} (X^2 + |z_j|^2)
$$
where $z_j$ are the (purely imaginary) roots. Each factor can then be interpreted as the generating function of a variable $2 B_{p_j}$, with $p_j = \frac{1}{1+|z_j|^2}$.
\end{proof}

 \subsection{Proof of Theorem \ref{thm:two_cycles}: two large cycles}

For any $M$, we define the polynomial of degree $2M+1$
\be \label{def:Q}
Q_{2M+1} (X) := \sum_{k=0}^{M} (-1)^k 
\frac{k!}{2M-k+1}
\binom{M}{k}^2 F_{2M-k+1}^+ (X)
\ee
The proof of Theorem \ref{thm:two_cycles} requires to identify a particular values of this polynomial, which is done in Lemma \ref{lem:identify_Q}.

\begin{lemma}\label{lem:identify_Q}
The following identity holds:
\be\label{ideux}
\sum_{k=1}^N \left( \frac{\Gamma(k+M)}{\Gamma(k)} \right)^2
= Q_{2M+1}(N).
\ee
\end{lemma}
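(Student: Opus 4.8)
The plan is to recognize the summand as a squared rising factorial, expand that square in the basis $(F_j^+)_j$, and then sum termwise using the telescoping identity \eqref{fact_identity}. Since $\Gamma(k+M)/\Gamma(k)=k(k+1)\cdots(k+M-1)=F_M^+(k)$, the left-hand side of \eqref{ideux} is $\sum_{k=1}^N F_M^+(k)^2$, so everything will follow once I establish the polynomial identity
\[
F_M^+(X)^2=\sum_{\ell=0}^M(-1)^\ell\,\ell!\,\binom{M}{\ell}^2 F_{2M-\ell}^+(X).
\]
Indeed, granting this and summing over $X=1,\dots,N$, the identity \eqref{fact_identity} in the form $\sum_{k=1}^N F_j^+(k)=\tfrac{1}{j+1}F_{j+1}^+(N)$ turns each term $F_{2M-\ell}^+$ into $\tfrac{1}{2M-\ell+1}F_{2M-\ell+1}^+(N)$, and the right-hand side becomes exactly $Q_{2M+1}(N)$.

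To prove the polynomial identity I would divide out the obvious common factor. For $0\le\ell\le M$ we have $2M-\ell\ge M$, hence $F_{2M-\ell}^+(X)=F_M^+(X)\,F_{M-\ell}^+(X+M)$; thus both sides are divisible by $F_M^+(X)$, and after cancelling it remains to show
\[
F_M^+(X)=\sum_{\ell=0}^M(-1)^\ell\,\ell!\,\binom{M}{\ell}^2 F_{M-\ell}^+(X+M).
\]
Both sides are monic of degree $M$ in $X$ (the $\ell=0$ term on the right is $F_M^+(X+M)$), so it suffices to check that the right-hand side vanishes at the $M$ points $X=-r$, $r=0,\dots,M-1$. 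Put $n=M-r\in\{1,\dots,M\}$; then $F_{M-\ell}^+(X+M)=F_{M-\ell}^+(n)=\Gamma(M+n-\ell)/\Gamma(n)$, and writing $\ell!\binom{M}{\ell}^2=\binom{M}{\ell}\,M!/(M-\ell)!$ and re-indexing $j=M-\ell$, the value at $X=-r$ collapses to
\[
(-1)^M M!\sum_{j=0}^M(-1)^j\binom{M}{j}\binom{j+n-1}{n-1}.
\]
Since $j\mapsto\binom{j+n-1}{n-1}$ is a polynomial in $j$ of degree $n-1\le M-1$, this alternating sum is, up to sign, the $M$-th finite difference at $0$ of a polynomial of degree $<M$, hence it is $0$. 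So the right-hand side vanishes at all $M$ of these points and, being monic of degree $M$, equals $F_M^+(X)$.

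The one genuinely nontrivial step is the polynomial identity, and within it the realization that dividing out $F_M^+$ reduces the claim to the classical vanishing of high-order finite differences of low-degree polynomials; the remaining manipulations are routine bookkeeping. One could instead verify the degree-$2M$ identity directly by exhibiting its $M$ double roots at $0,-1,\dots,-(M-1)$, but dividing out $F_M^+$ first is cleaner.
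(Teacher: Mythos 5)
Your proof is correct and follows the same skeleton as the paper's: both arguments reduce the lemma to the polynomial identity $F_M^+(X)^2=\sum_{k=0}^M(-1)^k\,k!\binom{M}{k}^2F_{2M-k}^+(X)$ and then sum over $X=1,\dots,N$ using \eqref{fact_identity} (the paper phrases this summation via the operator $\mathscr{D}$ of \eqref{def:discrete_differential}, which is the same telescoping). The one point of divergence is how that identity is established. The paper deduces it from the classical connection-coefficient identity \eqref{connection_coefs} for products of falling factorials, justified combinatorially by counting pairs of lists, and then converts to rising factorials via $F_n^-(X)=(-1)^nF_n^+(-X)$. You instead give a self-contained proof: factor out $F_M^+(X)$ using $F_{2M-\ell}^+(X)=F_M^+(X)\,F_{M-\ell}^+(X+M)$, observe both sides of the reduced identity are monic of degree $M$, and check vanishing at $X=0,-1,\dots,-(M-1)$, which after reindexing is the vanishing of the $M$-th finite difference of a polynomial of degree less than $M$. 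I verified the reindexing and the reduction; they are sound. Both routes are valid: yours avoids quoting the classical identity at the cost of a slightly longer verification, while the paper's makes the combinatorial meaning of the coefficients transparent.
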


\begin{proof}
We will make use of the following classical identity, for any $m \leq n$,
\be\label{connection_coefs}
F_m^- (X) F_n^- (X) 
= \sum_{k=0}^{m} \binom{m}{k} \binom{n}{k} k! \ F_{m+n-k}^- (X),
\ee 
which can be combinatorially understood, for integer values of $X$, as counting in two different ways the number of possible pairs of lists of length $m$ and $n$ respectively, chosen among $X$ possible items. It follows from identity \eqref{connection_coefs} with $m=n=M+1$ that
\be
F_{M+1}^+ (X)^2
= \sum_{k=0}^{M+1} (-1)^k \binom{M+1}{k}^2 k! F_{2M+2-k}^+ (X) 
\ee
So that $Q_{2M+1}$ is the unique polynomial of degree $2M+1$ such that
\be
Q_{2M+1}(0) = 0 \quad \& \quad
\mathscr{D} Q_{2M+1} = (F_{M}^+)^2.
\ee
In particular, for every $N \geq 1$,
\be
Q_{2M+1}(N) 
= \sum_{k=1}^N \mathscr{D} Q_{2M+1}(k)
= \sum_{k=1}^N F_M^+(k)^2
\ee
which is the claim.
\end{proof}

We can now establish Theorem \ref{thm:two_cycles}, following the same method as for Theorem \ref{thm:one_cycle}.

\begin{proof}[Proof of Theorem \ref{thm:two_cycles}]
Let $M\geq 1$ be a fixed integer and denote 
$P(t):= \E_{\sigma} (t^{\mathscr{C}([\sigma, \tau])})$, where $\tau = (1, \dots, M)(M+1, \dots, 2M) \in \mathfrak{S}_{2M}$ and $\sigma \in \mathfrak{S}_{2M}$ is uniform. $P$ is an polynomial of degree $2M$ such that $P(0)=0$. By Fact \ref{odd_fact}, $P$ is also even. For any $N\geq 1$, Lemma \ref{lem:corresp} gives
\begin{equation}
P(N) 
= \frac{1}{(2M)!} \E |\tr G^M |^4.
\end{equation}
This quantity can be computed exactly for $1 \leq N \leq M$, using Theorem \ref{thm:high_powers} and then Lemma \ref{lem:identify_Q}:
\begin{align*}
\E |\tr G^M |^4 & = \E \left( \sum_{i,j,k,l=1}^N \la_i^M \la_j^M \overline{\la_k}^M \overline{\la_l}^M \right) \\
& = \sum_{i=1}^N \E |\la_i|^{4M} + 2\left(\sum_{i=1}^N \E |\la_i|^{2M}\right)^2  - 2\sum_{i=1}^N \left(\E |\la_i|^{2M}\right)^2 \\
& = \sum_{i=1}^N \frac{\Gamma(i+2M)}{\Gamma(i)} + 2 \left( \sum_{i=1}^N \frac{\Gamma(i+M)}{\Gamma(i)} \right)^2 - 2 \sum_{i=1}^N \left( \frac{\Gamma(i+M)}{\Gamma(i)} \right)^2 \\
& = \frac{1}{2M+1} F_{2M+1}^+(N) + \frac{2}{(M+1)^2} F_{M+1}(N)^2 - 2 Q_{2M+1}(N).
\end{align*}
so that we have, for $ 0 \leq N \leq M$,
\begin{align*}
P(N) & = \frac{1}{(2M+1)!} F_{2M+1}^+ (N) + \frac{2}{(2M)!(M+1)^2}  F_{M+1}^+ (N)^2 - \frac{2}{(2M)!} Q_{2M+1} (N).
\end{align*}
Note that all terms $F_{2M+1}^+, F_{M+1}^+, Q_{2M}$ cancel at $-M, \dots, 0$. Therefore, the following polynomials have degree $2M$ and, by parity, coincide with $P$ on $2M+1$ values:
\begin{multline}
\frac{F_{2M+1}^+ (t) - F_{2M+1}^- (t)}{(2M+1)!}
+ \frac{2}{(2M)!}  \left( \frac{ F_{M+1}^+ (t) - F_{M+1}^- (t)}{M+1} \right)^2 \\
- \frac{2}{(2M)!} \left( Q_{2M+1} (t) + Q_{2M+1} (-t) \right)
\end{multline}
therefore it is equal to $P$, which is the claim.
\end{proof}
%

\subsection{Proof of theorem \ref{thm:many_trans}: product of many transpositions}\label{sec:last}

We conclude this note with a proof of Theorem \ref{thm:many_trans}, as well as a byproduct of this method: namely, identities in distribution for $\tr (G^2)$ and $\tr (G_1 G_2)$ where $G_1,G_2$ are i.i.d. complex Gaussian matrices.

\begin{proof}[Proof of Theorem \ref{thm:many_trans}]
Let $R$ be a real Gaussian matrix as defined in the beginning of Section \ref{sec:rmt}. In particular, its coefficients are i.i.d. variables such that $R_{ij}^2 \disteq 2 \gamma_{1/2}$, so that we have, on the one hand,
\begin{equation}
\tr ( R R^t ) = \sum_{i,j=1}^N R_{ij}^2
\disteq
\sum_{i,j=1}^N 2 \ga_{1/2}
\disteq 2 \ga_{N^2/2},
\end{equation}
which moments are given by
\begin{equation}
\E (\tr R R^t )^M = 2^M \frac{\Gamma(N^2/2 + M)}{\Gamma(N^2/2)}
= 2^M F_{M}^+ (N^2/2).
\end{equation}
On the other hand, in the genus expansion for real Gaussian matrices presented in \cite[Section 4.4]{DubachPeled}, the admissible pairings include all associations of $R$-edges and $R^t$-edges, with the \textit{caveat} that the corresponding edge-gluing is either direct (for mixed pairs $R$--$R^t$) or indirect (for pairs $R$--$R$ or $R^t$--$R^t$). In general, the resulting surface need not be oriented, and connected components may include, for instance, real projective planes, Klein bottles, etc. However, in this case, as in the proof of Theorem \ref{thm:uniform_permutation}, one can see that the resulting surface is always a union of spheres: indeed, only $2$-gons are being used, so that the resulting connected components are either spheres or projective planes; moreover, no connected component is a projective plane, as indirect pairings always happens an even number of times due to the specific structure of the faces (all have one $R$ edge and one $R^t$ edge). \medskip

The next important remark is that an admissible pairing between these $2M$ edges can be interpreted as a permutation $\nu \in \mathfrak{S}_{2M}$ which is a product of $M$ disjoint transpositions: one numbers the edges from $1$ to $2M$ and pairs edges $i$ and $j$ whenever $\nu(i)=j$, $\nu(j)=i$. Note that the main difference with the proof of Theorem \ref{thm:uniform_permutation} is that the pairing is not bipartite here, as all associations are admissible. Moreover, in the resulting surface, 
\begin{equation}
V(S_{\phi}) = 2 c(S_{\phi}) - 2 g(S_{\phi}) = 2 c(S_{\phi}) = \mathscr{C}(\nu \tau)
\end{equation}
where $\tau$ is another product of $2M$ disjoint transpositions, which corresponds to the edges themselves (so that $\tau = (1,2) \cdots (2M-1,2M)$ if the edges have been numbered from left to right). If we denote by $\mathfrak{D}_{2M}$ the set of such products of transpositions, which has cardinal $(2M-1)!!$, we find that
$$
\E (\tr R R^t )^M 
=
\sum_{\phi \in \AdPair(RR^t, \dots, RR^t)} N^{ V(\Gamma_{\phi})}
=
\sum_{\nu \in \mathfrak{D}_{2M}} N^{ \mathscr{C}(\nu \tau)}
=
(2M-1)!! \ \E_{\sigma} \left( N^{ \mathscr{C}( [ \si, \tau ] )} \right)
$$
where $\nu \disteq \sigma \tau \sigma^{-1}$ with $\sigma$ uniform. We conclude that:
$$
\E_{\sigma} \left( N^{\mathscr{C}([ \si, \tau ] )} \right) =
\frac{2^M}{(2M-1)!!}
F_{M}^+(N^2/2)
= 
\prod_{k=1}^{M} \frac{N^2 + 2 k -2 }{2k-1}.
$$
As this is true for any $N,M \geq 1$, we deduce that for any $M$, the relevant generating function is given by
$$
\E_{\sigma} \left( t^{ \mathscr{C}(\si, \tau)} \right)
=
\prod_{k=1}^{M} \frac{t^2 + 2 k-2 }{2k-1}
=
\prod_{k=1}^{M} \E \left( t^{2 \mathrm{Ber}\left(\frac{1}{2k-1}\right))} \right),
$$
which is the claim.
\end{proof}

\begin{figure}[ht]
\begin{center}
\begin{tikzpicture}
\draw (-5.3,1.2) node {$\nu=(1,2)(3,4)$};
\draw (-5.3,0) node {
\begin{tikzpicture}
\begin{scope}[thick,decoration={
    markings,
    mark=at position 0.52 with {\arrow{>}}}
    ]
\draw[thick,color=blue] (-1,0.2) to[bend right=50] (-.8,0.7) to[bend right=50] (-1.2,0.7) to[bend right=50] (-1,0.2) ;
\draw[thick,color=blue] (1,0.2) to[bend left=50] (.8,0.7) to[bend left=50] (1.2,0.7) to[bend left=50] (1,0.2) ;
\filldraw[fill=black!5!white,draw=white] (-1,-0.5) to[bend right=45] (-1,0.5) to[bend right=45] (-1,-0.5);
\draw[postaction={decorate}] (-1,-0.5) to[bend right=45] (-1,0.5);
\draw[postaction={decorate}] (-1,-0.5) to[bend left=45] (-1,.5);
\filldraw[fill=black!5!white,draw=white] (1,-0.5) to[bend right=45] (1,0.5) to[bend right=45] (1,-0.5);
\draw[postaction={decorate}] (1,-0.5) to[bend right=45] (1,0.5);
\draw[postaction={decorate}] (1,-0.5) to[bend left=45] (1,.5);
\draw (-.5,0) node {$R$};
\draw (-1.5,0) node {$R^t$};
\draw (1.5,0) node {$R$};
\draw (.5,0) node {$R^t$};
\end{scope}
\end{tikzpicture}
}; 
\draw (0,1.2) node {$\nu=(1,4)(2,3)$};
\draw (0,-.2) node {
\begin{tikzpicture}
\begin{scope}[thick,decoration={
    markings,
    mark=at position 0.52 with {\arrow{>}}}
    ] 
\draw[thick,color=blue] (-1,0.1) to[bend left=50] (1,0.1);
\draw[thick,color=blue] (1,-.2) to[bend left=90] (1.1,-.7) to[bend left=20] (-1.1,-.7) to[bend left=90] (-1,-.2);
\filldraw[fill=black!5!white,draw=white] (-1,-0.5) to[bend right=45] (-1,0.5) to[bend right=45] (-1,-0.5);
\draw[postaction={decorate}] (-1,-0.5) to[bend right=45] (-1,0.5);
\draw[postaction={decorate}] (-1,-0.5) to[bend left=45] (-1,.5);
\filldraw[fill=black!5!white,draw=white] (1,-0.5) to[bend right=45] (1,0.5) to[bend right=45] (1,-0.5);
\draw[postaction={decorate}] (1,-0.5) to[bend right=45] (1,0.5);
\draw[postaction={decorate}] (1,-0.5) to[bend left=45] (1,.5);
\draw (-.5,0) node {$R$};
\draw (-1.5,0) node {$R^t$};
\draw (1.5,0) node {$R$};
\draw (.5,0) node {$R^t$};
\end{scope}
\end{tikzpicture}
};
\draw (5.3,1.2) node {$\nu=(1,3)(2,4)$};
\draw (5.3,-.1) node {
\begin{tikzpicture}
\begin{scope}[thick,decoration={
    markings,
    mark=at position 0.52 with {\arrow{>}}}
    ] 
\draw[thick,color=blue] (1,0.2) to[bend right=80] (1.2,.65) to[bend right=40] (-1,.15);
\draw[thick,color=blue] (-1,-0.2) to[bend right=80] (-1.2,-.65) to[bend right=40] (1,-0.15);
\filldraw[fill=black!5!white,draw=white] (-1,-0.5) to[bend right=45] (-1,0.5) to[bend right=45] (-1,-0.5);
\draw[postaction={decorate}] (-1,-0.5) to[bend right=45] (-1,0.5);
\draw[postaction={decorate}] (-1,-0.5) to[bend left=45] (-1,.5);
\filldraw[fill=black!5!white,draw=white] (1,-0.5) to[bend right=45] (1,0.5) to[bend right=45] (1,-0.5);
\draw[postaction={decorate}] (1,-0.5) to[bend right=45] (1,0.5);
\draw[postaction={decorate}] (1,-0.5) to[bend left=45] (1,.5);
\draw (-.5,0) node {$R$};
\draw (-1.5,0) node {$R^t$};
\draw (1.5,0) node {$R$};
\draw (.5,0) node {$R^t$};
\end{scope}
\end{tikzpicture}
};
\end{tikzpicture}
\end{center}
\vspace{-.1in}
\caption{Illustration of the proof of Theorem \ref{thm:many_trans} for $M=2$. The three possible pairings correspond to the three double-transpositions of $\mathfrak{S}_4$.}
\label{Example3}
\end{figure}
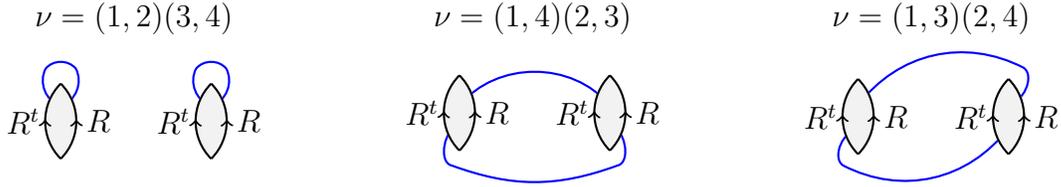

According to Lemma \ref{lem:corresp}, the same statistics can be accessed from the quantities $\E |\tr G^2 |^M$. This gives us the following description of the random variable $\tr (G^2)$.

\begin{proposition}\label{distrib_of_tr_G2}
Let $G$ be an complex Gaussian matrix, then
$$
\tr ( G^2 ) 
\disteq
e^{i \theta} \sqrt{ 4 \ga_1 \ga_{\frac{N^2}{2}}}
$$
where $\theta$ is uniform in $[0,2 \pi]$ and all variables are independent. 
\end{proposition}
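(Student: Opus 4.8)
The plan is to determine the law of $\tr(G^2)$ by treating its modulus and its phase separately: rotational invariance of the complex Gaussian ensemble forces the phase to be uniform and independent of the modulus, while the modulus is identified through its integer moments, which are precisely the quantities made accessible by Lemma \ref{lem:corresp} and already evaluated in the proof of Theorem \ref{thm:many_trans}.

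First I would handle the phase. For any $\alpha \in \R$ one has $e^{i\alpha} G \disteq G$, since a standard complex Gaussian entry is invariant under rotation; consequently $\tr((e^{i\alpha}G)^2) = e^{2i\alpha}\tr(G^2) \disteq \tr(G^2)$. As $\alpha$ runs over $[0,2\pi)$ the factor $e^{2i\alpha}$ covers the whole unit circle, so the law of $\tr(G^2)$ is invariant under all rotations. Writing $\tr(G^2) = |\tr G^2|\,e^{i\Phi}$ in polar form, this invariance means that $\Phi$ is uniform on $[0,2\pi]$ and independent of $|\tr G^2|$, matching the factor $e^{i\theta}$ in the statement. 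It therefore remains only to show that $|\tr G^2| \disteq \sqrt{4\ga_1\ga_{N^2/2}}$.

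Next I would compute the even moments of the modulus. Applying Lemma \ref{lem:corresp} with $C_1 = \cdots = C_M = 2$, so that $\tau = (1,2)\cdots(2M-1,2M) \in \mathfrak{S}_{2M}$ is a product of $M$ transpositions and $\sigma$ is uniform in $\mathfrak{S}_{2M}$, gives
\begin{equation}
\E_G\left( |\tr G^2|^{2M} \right)
= (2M)! \; \E_\sigma\left( N^{\mathscr{C}([\sigma,\tau])} \right).
\end{equation}
The permutation expectation on the right was evaluated in the proof of Theorem \ref{thm:many_trans}, namely $\E_\sigma( N^{\mathscr{C}([\sigma,\tau])} ) = \frac{2^M}{(2M-1)!!} F_M^+(N^2/2)$. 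Substituting this and using the identity $(2M)! = 2^M M!\,(2M-1)!!$, the double factorials cancel and I obtain
\begin{equation}
\E_G\left( |\tr G^2|^{2M} \right)
= 4^M\, M!\; F_M^+(N^2/2)
= \E\left( (4\ga_1\ga_{N^2/2})^M \right),
\end{equation}
the last equality following from $M! = \E(\ga_1^M)$, $F_M^+(N^2/2) = \E(\ga_{N^2/2}^M)$, and the independence of $\ga_1$ and $\ga_{N^2/2}$. Thus $|\tr G^2|^2$ and $4\ga_1\ga_{N^2/2}$ share all integer moments.

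The main obstacle is to upgrade this equality of moments to an equality in law, since for a product of two gamma variables the Stieltjes moment problem is a priori delicate: for fixed $N$ the moments grow like $(M!)^2$ in $M$, which is the borderline regime where indeterminacy can occur. I would close this gap by verifying Carleman's condition. Writing $\beta_M := 4^M M!\,F_M^+(N^2/2)$, Stirling's formula gives $\beta_M^{1/(2M)} \sim 2M/e$, hence $\beta_M^{-1/(2M)} \sim e/(2M)$ and $\sum_M \beta_M^{-1/(2M)} = \infty$. This confirms that both $|\tr G^2|^2$ and $4\ga_1\ga_{N^2/2}$ are determined by their moments, so they are equal in distribution. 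Combining this with the polar decomposition from the first step yields $\tr(G^2) \disteq e^{i\theta}\sqrt{4\ga_1\ga_{N^2/2}}$ with $\theta$ uniform and independent of the gamma variables, which is the claim.
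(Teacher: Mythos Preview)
Your proof is correct and follows essentially the same route as the paper: Lemma \ref{lem:corresp} combined with the computation from the proof of Theorem \ref{thm:many_trans} to identify the moments $\E|\tr G^2|^{2M}$, together with rotational invariance for the phase (the paper phrases the latter as the vanishing of the mixed moments $\E(\tr G^{M_1}\tr G^{*M_2})$ for $M_1\neq M_2$, which is the same fact). Your explicit verification of Carleman's condition is a welcome addition, since the paper simply asserts that equality of moments suffices without checking determinacy.
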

\begin{proof}
We proceed according to the following steps. For every $M \geq 1$, with $\tau$ a product of $M$ disjoint transpositions,
\begin{align*}
\E_{G} \left( |\tr(G^{2})|^{2M} \right)
& =
(2M)! \ 
\E_{\sigma} \left( N^{\mathscr{C}([\si, \tau])} \right)  & \text{(Lemma \ref{lem:corresp})} \\
& =
\frac{(2M)!}{(2M-1)!!}
\E_{R} \left(
\tr \left( R R^t \right)^M \right) & \text{(Genus expansion for $R$)} \\
& = 2^M M! \ \E \left( \left( 2 \ga_{N^2/2}\right)^M \right) & \text{(Distribution of $\tr ( R R^t)$)} \\
& = \E \left( \left( 4 \ga_1 \ga_{N^2/2} \right)^M \right) &
\end{align*}
The claim follows by equality of all relevant moments, as 
$$
\E \left( \tr G^{M_1} \tr G^{*M_2} \right) = 0
$$
when $M_1 \neq M_2$.
\end{proof}

Note that Proposition \ref{distrib_of_tr_G2} with $N=1$ is a particular case of a general identity for product of gamma distributions when parameters are half of consecutive integers. \\
%
%

It makes sense to compare Proposition \ref{distrib_of_tr_G2} with the corresponding identity with two independent complex Gaussian matrices:

\begin{proposition}\label{distrib_of_tr_G1G2}
Let $G_1, G_2$ be i.i.d. complex Gaussian matrices, then
$$
\tr ( G_1 G_2 ) 
\disteq
e^{i \theta} \sqrt{ \ga_1 \ga_{N^2}}
$$
where $\theta$ is uniform in $[0,2 \pi]$ and all variables are independent. 
\end{proposition}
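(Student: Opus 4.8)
\textbf{Proof proposal for Proposition \ref{distrib_of_tr_G1G2}.}

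The plan is to mirror the moment-matching strategy used in the proof of Proposition \ref{distrib_of_tr_G2}, but now exploiting a genus expansion for the alternating product $G_1 G_2 G_1 G_2 \cdots$ rather than for $RR^t$. The target identity involves $\tr(G_1 G_2)$, whose $(M_1, M_2)$ mixed moments $\E(\tr(G_1G_2)^{M_1} \overline{\tr(G_1G_2)}^{M_2})$ vanish unless $M_1 = M_2 =: M$ by rotational invariance $G_1 \mapsto e^{i\phi} G_1$; so it suffices to identify $\E |\tr(G_1 G_2)|^{2M}$ for all $N, M \geq 1$ and check it equals $\E((\gamma_1 \gamma_{N^2})^M)$, which by the additivity of independent gamma variables is $\E(\gamma_1^M) \E(\gamma_{N^2}^M) = M! \cdot \frac{\Gamma(N^2+M)}{\Gamma(N^2)} = M! \, F_M^+(N^2)$.

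First I would set up the genus expansion for $\E |\tr(G_1 G_2)|^{2M} = \E_{G_1, G_2}(\tr((G_1 G_2)^M) \tr((G_1 G_2)^{*M}))$. Here each of the $2M$ faces carries the four-letter pattern $G_1$–$G_2$–$G_1^*$–$G_2^*$ (arranged so that the $M$ unstarred faces read $G_1 G_2$ and the $M$ starred faces read $G_2^* G_1^*$). Wick's law forces pairings that match $G_1$-edges with $G_1^*$-edges and $G_2$-edges with $G_2^*$-edges separately; each such admissible bipartite pairing is encoded by a \emph{pair} of permutations $(\sigma_1, \sigma_2) \in \mathfrak{S}_M \times \mathfrak{S}_M$, and the contribution is $N^{V(S_\phi)}$ where $V(S_\phi)$ is the number of vertices of the resulting (orientable, since everything is complex Gaussian) surface. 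Reasoning exactly as in Lemma \ref{lem:corresp}, the number of surviving vertices is the number of cycles of a word in $\sigma_1, \sigma_2$ and the face-cyclic permutation; tracking the assimilations shows $V(S_\phi) = \mathscr{C}(\sigma_1 \sigma_2^{-1})$ — i.e. it depends only on $\sigma := \sigma_1 \sigma_2^{-1}$, which is a uniform element of $\mathfrak{S}_M$ when $(\sigma_1, \sigma_2)$ is uniform. Hence
\begin{equation}
\E |\tr(G_1 G_2)|^{2M} = \sum_{\sigma_1, \sigma_2 \in \mathfrak{S}_M} N^{\mathscr{C}(\sigma_1 \sigma_2^{-1})} = M! \sum_{\sigma \in \mathfrak{S}_M} N^{\mathscr{C}(\sigma)} = M! \cdot M! \, \E_\sigma(N^{\mathscr{C}(\sigma)}) = M! \, F_M^+(N^2),
\end{equation}
using Theorem \ref{thm:uniform_permutation} in the last step. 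This matches $\E((\gamma_1 \gamma_{N^2})^M)$ for every $N, M$, and since these determine the law of the modulus of $\tr(G_1G_2)$ (a compactly-controlled positive random variable after scaling, or simply by matching against the product-of-gammas moment problem, which is determinate here) and the phase is uniform and independent by rotational invariance, the claim follows.

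The main obstacle I anticipate is the careful combinatorial bookkeeping in the genus-expansion step: one must verify that the admissible pairings are in bijection with $\mathfrak{S}_M \times \mathfrak{S}_M$ (one permutation per Gaussian family), that the surface is genuinely orientable so that $V(S_\phi) = 2c(S_\phi) - 2g(S_\phi)$ collapses correctly, and — most delicately — that the vertex count reduces to $\mathscr{C}(\sigma_1 \sigma_2^{-1})$ rather than to some other word; this is the analogue of the key identity $V(S_\phi) = \mathscr{C}([\sigma,\tau])$ in Lemma \ref{lem:corresp} and requires drawing the face structure of $(G_1 G_2)^M (G_1G_2)^{*M}$ and chasing edge endpoints exactly as done there. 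Once that identity is in hand, the rest is the same moment-matching boilerplate as in Proposition \ref{distrib_of_tr_G2}.
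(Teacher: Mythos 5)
Your strategy is the same as the paper's: expand $\E|\tr(G_1G_2)|^{2M}$ by Wick/genus expansion, observe that admissible pairings are indexed by pairs $(\sigma_1,\sigma_2)\in\mathfrak{S}_M\times\mathfrak{S}_M$, reduce to the cycle count of the uniform permutation $\sigma_1\sigma_2^{-1}$, and finish by moment matching (with the phase handled by rotational invariance). The paper's proof is exactly this, compressed into one line.

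However, the step you yourself flagged as the delicate one is where you go wrong: the vertex count is $V(S_\phi)=2\,\mathscr{C}(\sigma_1\sigma_2^{-1})$, not $\mathscr{C}(\sigma_1\sigma_2^{-1})$. Each face here is a bigon ($G_1$--$G_2$ or $G_1^*$--$G_2^*$), and each connected component of the glued surface is a sphere containing, say, $k$ unstarred and $k$ starred bigons, hence $2k$ faces and $2k$ edges after gluing; Euler's formula then gives $V=\chi+E-F=2$ vertices \emph{per component}, so $V(S_\phi)=2c(S_\phi)=2\,\mathscr{C}(\sigma_1\sigma_2^{-1})$ (this is the same phenomenon as in the matrix proof of Theorem \ref{thm:uniform_permutation}, where the summand is $N^{2c(S_\phi)}$). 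As written, your displayed chain is internally inconsistent: $\sum_{\sigma_1,\sigma_2}N^{\mathscr{C}(\sigma_1\sigma_2^{-1})}=M!\sum_\sigma N^{\mathscr{C}(\sigma)}=M!\,F_M^+(N)$ by Theorem \ref{thm:uniform_permutation}, which does not equal the $M!\,F_M^+(N^2)$ you need to match $\E\left((\gamma_1\gamma_{N^2})^M\right)$. With the corrected exponent one gets
\begin{equation*}
\E|\tr(G_1G_2)|^{2M}=\sum_{\sigma_1,\sigma_2\in\mathfrak{S}_M}N^{2\mathscr{C}(\sigma_1\sigma_2^{-1})}=(M!)^2\,\E_\sigma\left(N^{2\mathscr{C}(\sigma)}\right)=M!\,F_M^+(N^2),
\end{equation*}
and the rest of your argument (vanishing of unbalanced mixed moments, determinacy of the product-of-gammas moment problem) goes through as stated.
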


\begin{proof}
Genus expansion yields cycles of $\sigma=\sigma_1 \sigma_2$, which is uniform:
$$
\E |\tr G_1 G_2|^{2M}
= (M!)^2 \E(N^{\mathscr{C}(\sigma_1 \sigma_2)})
= (M!)^2
\E_{\sigma}(N^{2\mathscr{C}(\sigma)})
= M! (M+N^2)!,
$$
and the result follows.
\end{proof}

\section*{Acknowledgments}

The author acknowledges funding from the European Union's Horizon 2020 research and innovation programme, under the Marie Sk{\l}odowska-Curie Grant Agreement No. 754411. \medskip

I would like to thank Percy Deift, Paul Bourgade, Yuval Peled and Igor Kortchemski for many discussions and advice on that topic.

\begin{bibdiv}
\begin{biblist}

\bib{Alexeev2011}{article}{
  title={Hultman numbers, polygon gluings and matrix integrals},
  author={Alexeev, N.},
  author={Zograf, P.},
  eprint={arXiv:1111.3061},
  year={2011}
}

\bib{Alexeev2014}{article}{
  title={Random matrix approach to the distribution of genomic distance},
  author={Alexeev, N.},
  author={Zograf, P.},
  journal={Journal of Computational Biology},
  volume={21},
  number={8},
  pages={622--631},
  year={2014},
  publisher={Mary Ann Liebert, Inc. 140 Huguenot Street, 3rd Floor New Rochelle, NY 10801 USA}
}


\bib{Diaconis2014}{article}{
  title={Unseparated pairs and fixed points in random permutations},
  author={Diaconis, P.},
  author={Evans, S.N.},
  author={Graham, R.},
  journal={Advances in Applied Mathematics},
  volume={61},
  pages={102--124},
  year={2014},
  publisher={Elsevier}
}


\bib{DubachPowers}{article}{
  author={Dubach, G.},
  title={Powers of Ginibre eigenvalues},
  journal={Electronic Journal of Probability},
  volume={23},
  pages={1--31},
  year={2018}
}

\bib{DubachPeled}{article}{
  author={Dubach, G.},
  author={Peled, Y.},
  title={On words of non-Hermitian random matrices},
  journal={The Annals of Probability},
  volume={49},
 number={4},
  year={2021},
  pages={1886--1916}
}

\bib{Ginibre1965}{article}{
  title={Statistical ensembles of complex, quaternion, and real matrices},
  author={Ginibre, J.},
  journal={Journal of Mathematical Physics},
  volume={6},
  number={3},
  pages={440--449},
  year={1965},
  publisher={American Institute of Physics}
}

\bib{HKPV}{article}{
  title={Determinantal processes and independence},
  author={Hough, J. B.},
  author={Krishnapur, M.},
  author={Peres, Y.},
  author={Vir{\'a}g, B.},
  journal={Probability surveys},
  volume={3},
  pages={206--229},
  year={2006},
  publisher={The Institute of Mathematical Statistics and the Bernoulli Society}
}

\bib{LP10}{article}{
    AUTHOR = {Linial, N.},
    author= {Puder, D.},
     TITLE = {Word maps and spectra of random graph lifts},
   JOURNAL = {Random Structures Algorithms},
    VOLUME = {37},
      YEAR = {2010},
    NUMBER = {1},
     PAGES = {100--135},
      ISSN = {1042-9832},
}

\bib{Nic94}{article}{
    AUTHOR = {Nica, Alexandru},
     TITLE = {On the number of cycles of given length of a free word in
              several random permutations},
   JOURNAL = {Random Structures Algorithms},
    VOLUME = {5},
      YEAR = {1994},
    NUMBER = {5},
     PAGES = {703--730},
      ISSN = {1042-9832},
       DOI = {10.1002/rsa.3240050506}
       }

\bib{Stanley}{article}{
  title={Two enumerative results on cycles of permutations},
  author={Stanley, R. P.},
  journal={European Journal of Combinatorics},
  volume={32},
  number={6},
  pages={937--943},
  year={2011},
  publisher={Elsevier}
}

\end{biblist}
\end{bibdiv}

\end{document}